\documentclass{amsart}
\linespread{1.15}

\usepackage{amsmath, amssymb, mathrsfs, verbatim, multirow}

\usepackage[all]{xy}
\usepackage{pifont}
\usepackage{float}
\usepackage{color}
\usepackage{enumitem}

\usepackage{tikz}
\usepackage{tikz-cd}
\usepackage{tkz-tab}
\usepackage{subcaption}

\usepackage[hidelinks]{hyperref}

\newtheorem{Teo}{Theorem}[section]
\newtheorem{Prop}[Teo]{Proposition}
\newtheorem{Lema}[Teo]{Lemma}
\newtheorem{Cor}[Teo]{Corollary}

\newtheorem{Result}[Teo]{Main Result}

\theoremstyle{theorem}
\newtheorem{Def}[Teo]{Definition}

\newtheorem{Obs}[Teo]{Remark}

\newcommand{\Z}{\mathbb{Z}}
\newcommand{\N}{\mathbb{N}}

\newcommand{\lra}{\longrightarrow}

\newcommand{\VR}{\mathcal{O}}

\newcommand{\MI}{\mathfrak{m}}

\newcommand{\supp}{\mbox{\rm supp}}

\newcommand{\K}{\mathbb{K}}

\newcommand{\Char}{\mbox{\rm char}}

\DeclareMathOperator{\inv}{in}
\begin{document}
\title[Tame fields, Graded Rings and Finite Complete Sequences]{Tame fields, Graded Rings and Finite Complete Sequences of Key Polynomials}
\author{Silva de Souza, C. H.}

%
\begin{abstract} In this paper, we present a criterion for $(K,v)$ to be henselian and defectless in terms of finite complete sequences of key polynomials. For this, we use the theory of Mac Lane-Vaquié chains and abstract key polynomials.  We  then prove that a valued field $(K,v)$ is tame if and only if $vK$ is $p$-divisible, $Kv$ is perfect and every simple algebraic extension of $K$ admits a finite complete sequence of key polynomials. The properties $vK$ $p$-divisible and $Kv$ perfect are described by the Frobenius endomorphism on the associated graded ring. We also make considerations on simply defectless and algebraically maximal valued fields and purely inertial and purely ramified extensions.

\end{abstract}

\keywords{Key polynomials, graded algebras, Mac Lane-Vaqui\'e key polynomials, abstract key polynomials}
\subjclass[2010]{Primary 13A18}

\maketitle

\section{Introduction}
 One of the main obstacles in the programs for solving the \textit{Local Uniformization Problem} in positive characteristic (which is the local version of \textit{resolution of singularities}) is the \textit{defect} (see \cite{CutkoskyDefectLocalUni} and \cite{kulhmannlocalunif}). The defect of an extension has been vastly studied in the recent years (see \cite{CutkoskyDefectLocalUni},  \cite{kulhmannlocalunif},
 \cite{NartJosneiDefectFormula} and \cite{Vaqfamilleadmissible}).  \textit{Tame fields}, defined in Section \ref{TameFields},   can be seen as a better environment to do Valuations Theory and  to achieve local uniformization. For instance,  Knaf and Kuhlmann prove in \cite{KK2}  that every valuation admits local uniformization in a separable extension of the function field. The main tool used to achieve this result was the theory of tame fields. For more on tame fields, the reader may see \cite{kuhlmannTameFields}.
 
 \vspace{0.3cm}
 
 There are several research programs aiming to achieve local uniformization in  positive characteristic (see \cite{KK}, \cite{JN_1} and \cite{Tei}). One of these programs has \textit{key polynomials} as one of its main tools. This concept was developed by Mac Lane in \cite{MacLane} and generalized by Vaquié in \cite{Vaq}. Recently, Novacoski and Spivakovsky in \cite{josneiKeyPolyPropriedades}  and Decaup, Mahboub and  Spivakovsky in \cite{spivamahboubkeypoly} introduced a new notion of key polynomial.  The structure of graded ring associated to a valuation is crucial in the study of these polynomials, together with the so called \textit{complete sets of key polynomials} (for definitions see Sections \ref{GradedRingsKx} and \ref{CompleteSets}).  
 
 \vspace{0.3cm}
 
 In this paper we present a link between the concept of tame field and the study of graded rings and complete sequences of key polynomials. Connections between tame fields and key polynomials were already studied for example in \cite{duttakuhlmannTameKeyPoly}, where complete sequences of key polynomials are constructed over tame fields. Here, we propose a necessary and sufficient condition for a valued  field to be tame in terms of the mentioned objects.
 
 \vspace{0.3cm}

In order to achieve this characterization of tame fields, we will study necessary and sufficient    conditions for a simple valued field  extension  to admit a finite complete sequence of key polynomials. For a simple algebraic valued field extension  $(L\mid K, v)$, with $L=K(\eta)$, consider the induced valuation $\nu(f):=v(f(\eta))$ on $K[x]$. Denote by $\Psi_m$ the set of all key polynomials of the same degree $m$.
 We will say that $\Psi_m$ \textit{admits a maximum} (with respect to $\nu$) if $\nu(\Psi_m):=\{\nu(Q)\mid Q\in \Psi_m\}$ admits a maximal element.
 
  \vspace{0.3cm}

The key to prove our main result will be Proposition \ref{lemSeqFinitaComplEPsimCommax}: it states that $\nu$ in the above conditions admits a finite complete sequence of key polynomials if and only if for every $m\geq 1$ either $\Psi_m$ is empty or it admits a maximum.
  In particular, this means that $\nu$ admits a complete sequence without \textit{limit key polynomials}.

\vspace{0.3cm}

We will say that $(L\mid K, v)$ is unibranched if the extension of $v$ from $K$ to $L$ is unique. Denoting by $d(L\mid K,v)$ the defect of the extension (see definition in Section \ref{ValDef}), our main result is the following.

\begin{Result}(Theorem \ref{propFiniteSeqiffDefectlessUnibranched})
	Let $(L\mid K, v)$ be a simple algebraic extension
	and
	let $\nu$ be the induced valuation on $K[x]$.
	Then $\nu$ admits a finite complete sequence of key polynomials if and only if $d(L\mid K, v)=1$ and $(L\mid K, v)$ is unibranched.
\end{Result}

The main reasoning used to prove Theorem \ref{propFiniteSeqiffDefectlessUnibranched} is that  a finite complete sequence of key polynomials gives us a \textit{Mac Lane-Vaquié chain of augmentations}  formed only by \textit{ordinary augmentations} (see definitions in Section \ref{Augmentations}). In one direction, we use the multiplicative property of the \textit{relative  ramification index} and the \textit{relative inertial degree} in a chain of valuations to prove that $[L:K]=(vL:vK)[Lv:Kv]$. In the other direction, we consider the  chain of valuations given by the \textit{truncations} $\nu_Q$ (see definition in Section \ref{CompleteSets}), where $Q$ is an element of a complete sequence $\boldsymbol{Q}$ for $\nu$. We use a formula for the defect to show that this chain of augmentations
 must be finite and hence also $\boldsymbol{Q}$ is finite.
 
  \vspace{0.3cm}
 
 As a first consequence of Theorem
 \ref{propFiniteSeqiffDefectlessUnibranched}, we have Corollary \ref{teoHensDefectlessiffKjfiniteCompleteseq}. For a  finite extension $(L\mid K, v)$, with $L=K(a_1,\ldots, a_m)$, we write $K_j = K(a_1, \ldots, a_j)$, $1\leq j\leq m$ and consider the simple extension $(K_j\mid K_{j-1},v)$ (with $K_0=K$). Corollary \ref{teoHensDefectlessiffKjfiniteCompleteseq} states that  $(K,v)$ is defectless and henselian (see definitions in Section \ref{ValDef}) if and only if for every finite extension $(L\mid K, v)$ each simple subextension $(K_j\mid K_{j-1},v)$ admits a finite complete sequence of key polynomials.
 
  \vspace{0.3cm}
  
  We then deduce
   the following characterization of tame fields:  
   $(K,v)$ is a tame field if and only if  $vK$ is $p$-divisible, $Kv$ is perfect and
  	every simple algebraic extension $(L\mid K, v)$  admits a finite complete sequence of key polynomials (Proposition \ref{teoTameiffGradedRingPerfFiniteSeqKP}).
  	
  	  \vspace{0.3cm}

For a valued field $(K,v)$, consider the associated graded ring ${\rm gr}_v(K)$ of $v$ (see definition in Section \ref{GradedRingsKx}).  Graded rings have shown to be important objects in Valuation Theory and in the study of extension of valuations. For instance, the formula for the defect given in \cite{NartJosneiDefectFormula} and  the works of Teissier connecting Valuation Theory to Toric Geometry  (see \cite{Tei}) use ${\rm gr}_v(K)$.  In  \cite[Remark 7.44]{RamificationValuationCutkosky}, one can see a concrete example where the defect appears as the degree of a field extension, and the fields involved are the field of fractions of graded rings. They are fundamental even for the definition of key polynomials (both Mac Lane-Vaquié conception and Spivakovsky's approach).  In addition, ${\rm gr}_v(K)$ is also related to the study of tame extensions. For example, in \cite{gradedfieldsext} it is shown that if $(K,v)$ is henselian, then there exists an one-to-one correspondence between the field extensions of the  field of fractions of ${\rm gr}_v(K)$  and the tame extensions of $(K,v)$. Other relations can be seen in \cite{BoulagouazTeo52}. Here, we will study the Frobenius endomorphism $F$ on ${\rm gr}_v(K)$ and prove that $F$
is surjective if and only if $v K$ is $p$-divisible  and $Kv$ is perfect (Proposition \ref{lemFsurjectivePDivPerf}).  
Hence, we can rephrase Proposition \ref{teoTameiffGradedRingPerfFiniteSeqKP} in a way that highlights the relation between tame fields and  ${\rm gr}_v(K)$.  

 \vspace{0.3cm}

We now describe the structure of this paper. In Section \ref{ValDef} we define valuations and the defect of a finite extension of valued fields.  Sections \ref{GradedRingsKx} and \ref{Augmentations} consist of a compilation of definitions and results (mostly from \cite{Nart}) on Mac Lane-Vaquié key polynomials, augmentations and Mac Lane-Vaquié chains of valuations. At the end of Section \ref{Augmentations} we present a formula from \cite{NartJosneiDefectFormula} that describes the defect of a simple valued field extension in terms of the defect of each augmentation in an associated chain of valuations.

 \vspace{0.3cm}
 
 Section \ref{CompleteSets} deals with (abstract) key polynomials (in the sense of \cite{spivamahboubkeypoly} and \cite{josneiKeyPolyPropriedades}) and complete sequences. We give proofs for Proposition \ref{lemSeqFinitaComplEPsimCommax}, Theorem \ref{propFiniteSeqiffDefectlessUnibranched} and Corollary \ref{teoHensDefectlessiffKjfiniteCompleteseq} mentioned above.

	\vspace{0.3cm}

 Section \ref{SimplyDefectless} is where we study what we call \textit{simply defectless fields} (i.e. valued fields for which every simple algebraic extension is defectless). 
 We prove that if  $vK$ is $p$-divisible, $Kv$ is perfect
 and $(K,v)$ is simply defectless, then it is defectless (Proposition \ref{teoSimplyDefectlessAndDefecteless}). We also relate finite complete sequences of key polynomials to the notion of \textit{algebraically maximal fields} (i.e. fields with no proper algebraic immediate extensions). 

\vspace{0.3cm}

Proposition \ref{teoTameiffGradedRingPerfFiniteSeqKP} mentioned above is proved in Section \ref{TameFields}, where we define tame fields and develop some initial properties of these valued fields. 

\vspace{0.3cm}

We highlight two properties which appear in Corollary \ref{teoHensDefectlessiffKjfiniteCompleteseq} and Corollary \ref{corFCS1simplydefectless}.

\begin{description}
	\item[(\textbf{FCS})] every simple extension $(L\mid K,v)$ admits a finite complete sequence of key polynomials;
	
	\item[(\textbf{FCS*})] every finite extension $(L\mid K, v)$ is such that each simple subextension \linebreak $(K_j\mid K_{j-1},v)$ admits a finite complete sequence of key polynomials.
\end{description}

\vspace{0.3cm }

Suppose $(K,v)$ is henselian and denote also by $v$ the unique extension of the valuation from $K$ to a fixed algebraic closure $\overline{K}$. At the end of Section \ref{TameFields}, we will have a diagram of implications as illustrated in Figure  \ref{figImplications}.

\begin{figure}[H]
	\begin{center}
		\includegraphics[scale=0.27]{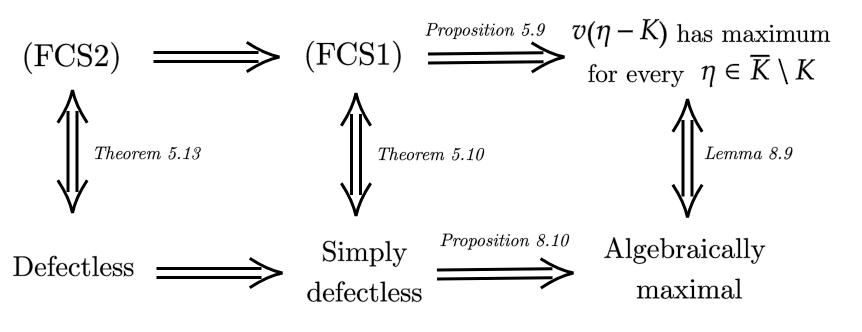}
		\caption{Diagram of implications when $(K,v)$ is henselian. }\label{figImplications}
	\end{center}
\end{figure}

 In Section \ref{GradedRings} we prove Proposition \ref{lemFsurjectivePDivPerf} using  the isomorphism between ${\rm gr}(\VR_K)$ and the semigroup ring $Kv [t^{v K^{\geq 0}}]_\epsilon$ given in \cite{Matheus}.

	\vspace{0.3cm}

\vspace{0.3cm }

In Section \ref{Purely}  we make some considerations on purely inertial and purely ramified extensions. We will study the corresponding \textit{module of Kähler differentials} of these extensions when $vK$ is $p$-divisible or $Kv$ is perfect.
   This module is related to the studies developed recently in \cite{cutkoskykuhlmannKahlerDeeplyRamified}, \cite{cutkoskykuhlmannrzepkaIndependetDefect} and \cite{josneiSpivaKahlerDif} about \textit{deeply ramified fields}. 
 
	\vspace{0.3cm}

%
%
%
%
\textbf{Acknowledgments.} The author would like to thank J. Novacoski and M. Spivakovsky for the discussions, suggestions and the careful reading of the first version of this paper.
I also would like to thank the anonymous referee for 
providing useful suggestions  that improved the exposition of the results.

\vspace{0.1cm}

 During the realization of this project the author was supported by a grant from Funda\c c\~ao de Amparo \`a Pesquisa do Estado de S\~ao Paulo (process numbers 2021/13531-0 and 2023/04651-8).

\section{Valuations and defect extensions}\label{ValDef}


\begin{Def}
Take a commutative ring $R$ with unity. A \index{Valuation}\textbf{valuation} on $R$ is a mapping $\nu:R\lra \Gamma_\infty :=\Gamma \cup\{\infty\}$ where $\Gamma$ is a totally ordered abelian group (and the extension of addition and order to $\infty$ is done in the natural way), with the following properties:
\begin{description}
	\item[(V1)] $\nu(ab)=\nu(a)+\nu(b)$ for all $a,b\in R$.
	\item[(V2)] $\nu(a+b)\geq \min\{\nu(a),\nu(b)\}$ for all $a,b\in R$.
	\item[(V3)] $\nu(1)=0$ and $\nu(0)=\infty$.
\end{description}
\end{Def}


 Let $\nu: R \lra\Gamma_\infty$ be a valuation. The set $\supp(\nu)=\{a\in R\mid \nu(a )=\infty\}$ is a prime ideal, called the \textbf{support} of $\nu$. The \textbf{value group} of $\nu$ is the subgroup of $\Gamma$ generated by 
$\{\nu(a)\mid a \in R\setminus \supp(\nu) \}
$ and is denoted by $\nu R$ or $\Gamma_\nu$. A valuation $\nu$ is a \index{Valuation!Krull}\textbf{Krull valuation} if $\supp(\nu)=(0)$.  If $\nu$ is a Krull valuation, then $R$ is a domain and we can extend $\nu$ to $K={\rm Frac}(R)$ on the usual way. In this case, 
 define the \textbf{valuation ring} as $\VR_{K}:=\{ a\in K\mid \nu(a)\geq 0 \}$. The ring $\VR_K$ is a local ring with unique maximal ideal $\MI_K=\{a\in K\mid \nu(a)>0 \}. $ We define the \textbf{residue field} of $\nu$ to be the field $\VR_K/\MI_K$ and denote it by $ K\nu$. The image of $a\in \VR_K$ in $ K\nu$ is denoted by $a\nu$. 
 
 \vspace{0.3cm}

Let $(K,v)\subset (L,w)$ be a valued field extension. We have that $vK$ can be seen as a subgroup of $wL$ and $Lw$ as a field extension $Kv$. We define 
$$e(w/v):=(wL:vK) \text{ and }  f(w/v):= [Lw: Kv]$$
the \textbf{ramification index} and the \textbf{inertia degree}, respectively. 	If $n=[L:K]$ is finite, then both the ramification index and the inertia degree are finite and $e(w/v)f(w/v)\leq n$. In this situation, we can have only a finite number of extension of $v$ from $K$ to $L$. Denoting by $w_1, \ldots, w_r$ all these extensions, we have the \textit{fundamental inequality} (see \cite{Eng})
$$\sum_{i=1}^r e(w_i/v)f(w_i/v)\leq [L:K].$$

A valued field $(K,v)$ is said to be \textbf{henselian} if for every algebraic extension $L$ of $K$ there exists only one extension of $v$ from $K$ to $L$. Fix an algebraic closure $\overline{K}$ of K and an extension $\overline{v}$ of $v$ to $\overline{K}$. Take the separable closure $K^{\rm sep}$ of $K$ in $\overline{K}$ and consider $K^h$ the fixed field  of $G^d:=\{\sigma\in {\rm Gal}(K^{\rm sep}\mid K)\mid \overline{v}\circ \sigma =\overline{v}\}$.  Taking the restriction $v^h=\overline{v}|_{K^h}$,  the valued field $(K^h,v^h)$ is a \textbf{henselization} of $(K,v)$, that is, a henselian field which is contained in every other henselian field that extends $(K,v)$. All henselizations of $(K,v)$ are isomorphic.

\vspace{0.3cm}

Let $(K,v)\subset (L,w)$ be a valued field extension. Take henselizations $K^h$ and $L^h$ of $K$ and $L$ inside $\overline{K}=\overline{L}$.
 We define the \textbf{defect} of the extension $w/v$ as
$$d(w/v) = \frac{[L^h:K^h]}{e(w/v)f(w/v)}. $$
If we call also by $v$ the valuation on $L$ extending $v$ on $K$, then we will write the extension as $(L\mid K, v)$ and denote the defect of the extension by $d(L\mid K, v)$.

\vspace{0.3cm}

Take $w_1, \ldots, w_r$ all extensions of $v$ to $L$. We have the following equality (see \cite{Eng}):
$$ [L:K]  = \sum_{i=1}^r e(w_i/v)f(w_i/v)d(w_i/v).$$
If the extension $w$ is unique, then 
$$d(w/v) = \frac{[L:K]}{e(w/v)f(w/v)} \text{ and } n=efd.$$
We say that $w/v$ is a \textbf{defect extension} if $d(w/v)>1$ and a \textbf{defectless extension} if $d(w/v)=1.$ The valued field $(K,v)$ is called \textbf{defectless} if every finite valued field extension of $K$ is a defectless extension.

\section{Graded rings and Mac Lane-Vaquié key polynomials}\label{GradedRingsKx}

Let $\nu$  be a valuation on a integral domain $R$.
For each $\gamma\in \nu(R)$, we consider the abelian groups 
$$\mathcal{P}_\gamma = \{ a\in R\mid \nu(a)\geq \gamma \} \text{ and }  \mathcal{P}_{\gamma}^{+} = \{ a\in R\mid \nu(a)>\gamma \}. $$

\vspace{0.2cm} 

\begin{Def}\label{defAnelGrad}
	The \textbf{graded ring } associated to $\nu$  is defined by 
	$${\rm gr}_{\nu}(R):= \bigoplus_{\gamma\in \nu(R)}\mathcal{P}_\gamma/ \mathcal{P}_{\gamma}^{+}.$$	
\end{Def}

\vspace{0.2cm} 

For simplicity of notation, we will also write $\mathcal{G}_\nu$ (when the ring $R$ is clear) or $ {\rm gr}(R)$ (when the valuation is $\nu$ is clear) to denote ${\rm gr}_{\nu}(R)$. 

\vspace{0.2cm} 

The sum on $\mathcal{G}_\nu$ 
 is defined coordinatewise and the product is given by extending  the product of homogeneous elements, which is described by 
$$\left( a+ \mathcal{P}_{\nu(a)}^{+}  \right) \cdot \left( b+ \mathcal{P}_{\nu(b)}^{+} \right): = \left(  ab +\mathcal{P}_{\nu(a)+\nu(b)}^{+}\right). $$

\vspace{0.2cm} 

For $a\not \in \supp(\nu)$,  we denote by $\inv_\nu(a):=a+ \mathcal{P}_{\nu(a)}^{+}$ the image of  $a$ in $\mathcal{P}_{\nu(a)}/ \mathcal{P}_{\nu(a)}^{+} \subseteq \mathcal{G}_\nu.$  If $a\in \supp(\nu)$, then we define $\inv_\nu(a)=0$. The next lemma follows from the definitions above. 

\vspace{0.2cm} 

\begin{Lema}\label{lemPropinv} 
	Let $a,b\in R$. We have the following. 
	\begin{enumerate}
		
		\item $\mathcal{G}_\nu$ 
		is an integral domain. 
		
		\item $\inv_\nu(a)\cdot \inv_\nu(b)=\inv_\nu(ab)$.

		\item $\inv_\nu(a)=\inv_\nu(b)$ if and only if 
		$\nu(a-b)>\nu(a)=\nu(b)$.
		
		\item If $\nu(a)=\nu(b)= \nu(a+b)$, then $\inv_\nu(a+b)=\inv_\nu(a)+\inv_\nu(b)$.
	\end{enumerate}

\end{Lema}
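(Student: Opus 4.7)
The plan is to treat (2), (3), (4) as direct unpackings of the definitions, and then use (2) to deduce (1) via the standard top-degree argument. No step is genuinely delicate; the only bookkeeping issue will be a convention in (4), discussed below.

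For (2), I would split on whether $a$ or $b$ belongs to $\supp(\nu)$. If neither does, the prescription for the product of homogeneous elements, combined with (V1), directly gives $\inv_\nu(a)\,\inv_\nu(b) = ab + \mathcal{P}_{\nu(a)+\nu(b)}^{+} = \inv_\nu(ab)$. If at least one of $a,b$ lies in $\supp(\nu)$, then $ab \in \supp(\nu)$ (the support is an ideal, an immediate consequence of (V1)--(V2)), and both sides equal $0$ by convention. For (3), equality of two nonzero homogeneous elements $\inv_\nu(a) = \inv_\nu(b)$ forces them into the same graded piece, so $\nu(a) = \nu(b) =: \gamma$, and equality in $\mathcal{P}_\gamma/\mathcal{P}_{\gamma}^{+}$ is precisely $a-b \in \mathcal{P}_{\gamma}^{+}$, i.e., $\nu(a-b) > \gamma$; the converse runs the same line backwards. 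For (4), writing $\gamma := \nu(a) = \nu(b) \leq \nu(a+b)$, we have $a+b \in \mathcal{P}_\gamma$, and coordinatewise addition yields $\inv_\nu(a) + \inv_\nu(b) = (a+b) + \mathcal{P}_{\gamma}^{+}$. When $\nu(a+b) = \gamma$ this is exactly $\inv_\nu(a+b)$; when $\nu(a+b) > \gamma$ both sides vanish in the piece at $\gamma$, once one adopts the natural convention of viewing $\inv_\nu(a+b)$ relative to the ambient degree $\gamma$.

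For (1), the key input, available from (2), is that every nonzero homogeneous element $\inv_\nu(a)$ (so $a \notin \supp(\nu)$) is a nonzerodivisor on homogeneous elements: for $\inv_\nu(b) \neq 0$ we have $\inv_\nu(a)\,\inv_\nu(b) = \inv_\nu(ab) \neq 0$ since $ab \notin \supp(\nu)$, using that $R$ is a domain. To pass from homogeneous elements to arbitrary $\alpha, \beta \in {\rm gr}_\nu(R)$ with $\alpha\beta = 0$, I would decompose each into homogeneous components, pick the highest-degree nonzero component of each, and observe that their product is the highest-degree component of $\alpha\beta$; if $\alpha, \beta \neq 0$ this would be a nonzero homogeneous piece of $\alpha\beta = 0$, a contradiction. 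The main (and only) obstacle is the convention point in (4); everything else is routine definition-chasing.
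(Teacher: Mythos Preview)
Your proposal is correct and is precisely the routine definition-chase the paper has in mind: the paper does not give any proof of this lemma, merely asserting that it ``follows from the definitions above.'' One minor remark on your argument for (1): the appeal to ``$R$ is a domain'' is unnecessary, since $a,b\notin\supp(\nu)$ already forces $\nu(ab)=\nu(a)+\nu(b)\in\Gamma$ by (V1), hence $ab\notin\supp(\nu)$ and $\inv_\nu(ab)\neq 0$.
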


\vspace{0.2cm} 

We focus on valuations on the polynomial ring $K[x]$.
Let $\nu_i$ and $\nu_j$ be valuations on $K[x]$ with common value group  such that $\nu_i(f)\leq \nu_j(f)$ for all $f\in K[x]$ (denoted by $\nu_i<\nu_j$). Let $\mathcal{P}_\gamma(K[x],\nu_i) =\{ f\in K[x]\mid \nu_i(f)\geq \gamma \} $ (analogously we define $\mathcal{P}_\gamma(K[x],\nu_{j}), \mathcal{P}_{\gamma}^{+}(K[x],\nu_i)$ and $\mathcal{P}_{\gamma}^{+}(K[x],\nu_{j})$).
We have the inclusions
$$\mathcal{P}_\gamma(K[x],\nu_i)\subseteq  \mathcal{P}_\gamma(K[x],\nu_{j}) \text{ and } \mathcal{P}_{\gamma}^{+}(K[x],\nu_i)\subseteq  \mathcal{P}_{\gamma}^{+}(K[x],\nu_{j}) $$
for any $\gamma\in \nu_i(K[x])= \nu_{j}(K[x])$. 
We consider the following map: 
\begin{align}\label{eqPhiij}
	\phi_{ij}: \hspace{0.3cm}\mathcal G_{\nu_i}\hspace{0.2cm} &\longrightarrow \hspace{0.2cm} \mathcal G_{\nu_j}\\
	\inv_{\nu_i}(f) &\longmapsto  \begin{cases}
		\inv_{\nu_j}(f)& \mbox{ if }\nu_i(f)=\nu_{j}(f)\\ 
		0&\mbox{ if }\nu_i(f)<\nu_{j}(f),
	\end{cases}  \nonumber
\end{align}

\noindent and we extend this map naturally for an arbitrary element. This map is well-defined \cite[Corollary 5.5]{Andrei} and, by construction, it is a homomorphism of graded rings.

\vspace{0.2cm}

	Take $f,g\in K[x]$.
	We say that $f$ is $\boldsymbol{\nu}$\textbf{-equivalent} to $g$ (denoted by $f\sim_\nu g$) if $\inv_\nu(f) = \inv_\nu(g)$.
		 We say that $g$ $\boldsymbol{\nu}$\textbf{-divides} $f$ (denoted by $g\mid_\nu f$) if there exists \linebreak $h\in K[x]$ such that $f\sim_\nu g\cdot h$.
	The polynomial $f$ is $\boldsymbol{\nu}$\textbf{-minimal} if $f\mid_\nu h$ implies $\deg(h)\geq \deg(f)$  for every $h\in K[x]$. We say that $f$ is $\boldsymbol{\nu}$\textbf{-irreducible} 
	 if $\inv_\nu(f)\mathcal{G}_\nu$ is a non-zero prime ideal.

\begin{Def}
	A monic polynomial $Q\in K[x]$ is a \textbf{Mac Lane-Vaquié (MLV) key polynomial} for $\nu$ if it is $\nu$-irreducible and $\nu$-minimal.
\end{Def}

Denote by ${\rm KP}(\nu)$ the set of all MLV key polynomials for $\nu$. For all $\phi\in {\rm KP}(\nu)$ we write
$$[\phi]_\nu := \{\varphi\in {\rm KP}(\nu)\mid \phi \sim_\nu \varphi\}. $$
All polynomials in $[\phi]_\nu$ have the same degree.

\vspace{0.3cm}

For each $f\in K[x]$ and $\phi$ a non-constant polynomial there exist uniquely determined polynomials $f_0,\ldots, f_r\in K[x]$ with $\deg(f_k)<\deg(\phi)$ for every $k$, $0\leq k\leq r$, such that 	$f = f_0+f_1\phi+\ldots +f_r\phi^r.$
We call this expression the \textbf{$\boldsymbol{\phi}$-expansion} of $f$.
If  $\phi\in {\rm KP}(\nu)$, then  by \cite[Proposition 2.3]{NartKeyPolyValuedFields} the $\nu$-minimality of $\phi$ is equivalent to 
$$\nu(f) = \min_{0\leq k \leq r}\{\nu(f_k\phi^k)\}. $$

When ${\rm KP}(\nu)\neq \emptyset$, we define $$\boldsymbol{\deg(\nu)} := \min\{\deg(\phi)\mid \phi \in {\rm KP}(\nu)\}.$$ 
If $\supp(\nu)\neq (0)$, then  ${\rm KP}(\nu)= \emptyset$ (see \cite[Theorem 4.4]{NartKeyPolyValuedFields} or \cite[Theorem 2.2]{NartJosneiDefectFormula}). In this case,
if $\supp(\nu)=(g)$ for some irreducible polynomial $g$, then we define $\deg(\nu):=\deg(g)$.

\section{Augmentations and Mac Lane-Vaquié chains}\label{Augmentations}

The content of this section was mostly taken from \cite{Nart}. 

\subsection{Ordinary augmentations }

Let $\mu$ be a valuation on $K[x]$
and $\phi$ be a Mac Lane-Vaquié key polynomial for $\mu$. Let $\Lambda$ be a totally ordered group such that $\Gamma_\mu\subset \Lambda$ and $\gamma\in \Lambda_\infty$ such that $\gamma>\mu(\phi)$. For $f\in K[x]$, write the $\phi$-expansion 
$f = f_0+f_1\phi+\ldots +f_r\phi^r. $

\begin{Def}The valuation
	$$\nu(f) := \min_{0\leq k \leq r }\{ \mu(f_k)+k\gamma \} $$
 is called an  \textbf{ordinary augmentation}  of $\mu$. We denote it by 
	$\nu=[\mu;\phi,\gamma]. $
\end{Def}

See   \cite[Theorem 4.4]{MacLane} or \cite[Theorem 5.1]{josneimonomial}  for a proof that $\nu$ defined above is indeed a valuation.
We have the following properties \cite[Proposition 2.1]{Nart}:
\begin{itemize}
	\item $\mu<\nu$, i.e., $\mu(f)\leq \nu(f)$ for every $f\in K[x]$ and there exists $h\in K[x]$ such that $\mu(h)<\nu(h)$;
	
	\item if $\gamma<\infty$, then $\phi$ is a MLV key polynomial for $\nu$ of minimal degree (in particular $\deg(\nu)=\deg(\phi)$). 
	
\end{itemize}

\subsection{Depth zero valuations}
Let $(K,v)$ be a valued field. Take $a\in K$ and $\gamma \in \Lambda_\infty\supset \Gamma_v$. The map $\mu=\omega_{a,\gamma}$, defined as
$$\mu\left( \sum_{0\leq k\leq r}a_k(x-a)^k\right):= \min_{0\leq k\leq r}\{v(a_k)+k\gamma\}, $$
is called a \textbf{depth zero valuation}. If $\gamma<\infty$, then $x-a$ is a MLV key polynomial for $\mu$ and $\deg(\mu)=1$ (see \cite{Nart} for more details).

\subsection{Continuous families of valuations and MLV limit key polynomials}

Take $\mu$ a valuation on $K[x]$ and $\Lambda\supset\Gamma_\mu$. Suppose $\mu$ non-maximal in the set of all valuations of $K[x]$ with value group contained in $\Lambda$ with partial order  given by $\mu_1\leq \mu_2$ if and only if $\mu_1(f)\leq \mu_2(f)$ for every $f\in K[x]$. 
We say that a family $\mathcal{A} = (\rho_i)_{i\in A}$ of valuations on $K[x]$ 
 is a \textbf{continuous  family of augmentations of} $\mu$ of stable degree $m$ if
\begin{itemize}
		\item $A$ is a totally ordered set without maximal element and $i\mapsto \rho_i$ is an isomorphism of totally ordered sets;
	
	\item $\rho_i = [\mu; \chi_i,\beta_i]$ where $\beta_i\in \Lambda$, $\beta_i>\mu(\chi_i)$ and $\beta_i<\beta_j$ for $i<j$;
	
%
	\item all MLV key polynomials $\chi_i\in {\rm KP}(\mu)$ have the same degree $\boldsymbol{\deg(\mathcal{A})}:=m$, called the stable degree of $\mathcal{A}$;
	
	\item for all $i<j$,  $\chi_j$ is a MLV key polynomial for $\rho_i$,  $\chi_i\nsim_{\rho_i}\chi_j$ and \linebreak $\rho_j = [\rho_i;\chi_j,\beta_j]$.
\end{itemize}

\vspace{0.3cm}

A polynomial $f\in K[x]$ is said to be $\mathcal{A}$\textbf{-stable} if there exists $i_f\in A$ such that $\rho_i(f)=\rho_{i_f}(f)$ for all $i\geq i_f$. We denote this stable value by $\rho_\mathcal{A}(f):=\rho_{i_f}(f)$. Let $m_\infty$ be the minimal degree of an $\mathcal{A}$-unstable polynomial. If all polynomials are $\mathcal{A}$-stable, then we set $m_\infty=\infty$. We say that $\mathcal{A}$ is \textbf{essential} if $\deg(\mathcal{A})<m_\infty<\infty$.

\begin{Def}
	A monic polynomial $\phi\in K[x]$ is a \textbf{MLV limit key polynomial} for $\mathcal{A}$ if it is $\mathcal{A}$-unstable and has the smallest degree among $\mathcal{A}$-unstable polynomials. 
\end{Def}

We denote by ${\rm KP}_\infty(\mathcal{A})$ the set of MLV limit key polynomials for $\mathcal{A}$.
Take $\phi\in {\rm KP}_\infty(\mathcal{A}) $ and $\gamma\in \Lambda_\infty$ such that $\gamma>\rho_i(\phi)$ for all $i\in A$. For $f\in K[x]$, write the $\phi$-expansion 
$f = f_0+f_1\phi+\ldots +f_r\phi^r. $

\begin{Def}
	The valuation 
	$$\nu(f) = \min_{0\leq k\leq r}\{\rho_{\mathcal{A}}(f_k)+k\gamma\}, $$
	 is called a \textbf{limit augmentation} of $\mathcal{A}$. We  denote it by  $\nu = [\mathcal{A}; \phi, \gamma]. $
\end{Def}

See \cite[Theorem 6.6]{dutta}, \cite[Theorem 5.16]{josneimonomial} or \cite[Proposition 1.22]{Vaq} for a proof that $\nu$ defined above is indeed a valuation such that $\rho_i<\nu$ for all $i\in A$.
If $\gamma<\infty$, then $\phi\in {\rm KP}_\infty(\mathcal{A})$ is a MLV key polynomial for $\nu=[\mathcal{A};\phi, \gamma]$ and $\deg(\nu) = \deg(\phi)$.

\vspace{0.3cm}

For $\mathcal{A}=(\rho_i)_{i\in A}$  a continuous family of augmentations of $\mu$,  we say that $\mathcal{A}$ \textbf{is based on} $\mu$ if there exists $i_0=\min(A)$ and $\mu=\rho_{i_0}$. 
We will write $\mu\to \nu$ to describe an augmentation. In this case, either
$$\nu = [\mu;\phi, \gamma], \qquad \phi\in {\rm KP}(\mu), \qquad \gamma>\mu(\phi) $$
or
$$ \nu = [\mathcal{A};\phi, \gamma], \qquad \phi\in {\rm KP}_\infty(\mathcal{A}), \qquad \gamma>\rho_i(\phi) \text{ for all } i\in A,$$
with $\mathcal{A}$ based on $\mu$.

\subsection{Mac Lane-Vaquié Chains}\label{SubSecMCVChain}

Let 
\begin{equation}\label{eqMLVChain}
\mu_0 \overset{\phi_1,\gamma_1} {\longrightarrow}\mu_1 \overset{\phi_2,\gamma_2} {\longrightarrow} \cdots  \overset{} {\longrightarrow} \mu_{t-1} \overset{\phi_t,\gamma_t} {\longrightarrow} \mu_t\cdots
\end{equation}
be a finite or countably infinite chain of augmented valuations where each $\mu_{t+1}$ is of one of the following types:
\begin{itemize}
	\item \textbf{Ordinary augmentation:} $\mu_{t+1} = [\mu_t;\, \phi_{t+1}, \gamma_{t+1}]$ for some $\phi_{t+1}\in {\rm KP}(\mu_t)$.
	
	\item \textbf{Limit augmentation:} $\mu_{t+1} = [\mathcal{A};\, \phi_{t+1}, \gamma_{t+1}]$ for some $\phi_{t+1}\in {\rm KP}_{\infty}(\mathcal{A})$, where $\mathcal{A}$ is an essential continuous family of augmentations of $\mu_t$.
\end{itemize}

 By \cite[p.13]{Nart} we have $\gamma_t=\mu_t(\phi_t)<\gamma_{t+1}$ and $\phi_t$ is a key polynomial for $\mu_t$ of minimal degree.

\vspace{0.3cm}

	Let $\mu$ and $\nu$ be valuations on $K[x]$ such that $\mu<\nu$. Define $$\Phi_{\mu,\nu}:=\{  \phi\in K[x]\mid \phi \text{ monic with minimal degree such that } \mu(\phi)<\nu(\phi) \}.$$
We call $\Phi_{\mu,\nu}$ the \textbf{tangent direction} of $\mu$ determined by $\nu$. 
By \cite[Corollary 2.5]{Nart} and \cite[Theorem 1.15]{Vaq}, 
every $\phi \in \Phi_{\mu,\nu}$ is a MLV key polynomial for $\mu$ and $\Phi_{\mu,\nu} = [\phi]_\mu$. Denote by $\deg(\Phi_{\mu,\nu})$ the common degree of the polynomials in $\Phi_{\mu,\nu}$. This set has also the following properties (see \cite{Nart}):

\begin{itemize}
	\item for any $\phi\in \Phi_{\rho,\mu}$ we have $\rho < [\rho; \phi, \mu(\phi)]\leq \mu$;
	
	\item for valuations $\rho <\mu <\nu$, we have $\Phi_{\rho,\mu} = \Phi_{\rho,\nu}$;
	
	\item in (\ref{eqMLVChain}), we have $\deg(\mu_t)=\deg(\phi_t)\mid \deg(\Phi_{\mu_t,\mu_{t+1}})$.

\end{itemize}

If $
\mathcal{A}$ is a continuous family of limit augmentations of $\mu$, then we denote $\Phi_{\mu, \mathcal{A}} := \Phi_{\mu,\rho_i} = [\chi_i]_{\mu}$. This set does not depend on $i$. By \cite{Nart}, for a chain of valuations as in (\ref{eqMLVChain}) we have

 $$\Phi_{\mu_{t},\mu_{t+1}}=\begin{cases}
	[\phi_{t+1}]_{\mu_t}, &\text{ if } \mu_t\to\mu_{t+1} \text{ is ordinary},\\
	\Phi_{\mu_t,\mathcal{A}}=[\chi_{i}]_{\mu_t}, &\text{ if } \mu_t\to\mu_{t+1} \text{ is limit},
\end{cases} $$
and
$$\deg(\Phi_{\mu_{t},\mu_{t+1}})=\begin{cases}
	\deg(\phi_{t+1}), &\text{ if } \mu_t\to\mu_{t+1} \text{ is ordinary},\\
	\deg(\mathcal{A}), &\text{ if } \mu_t\to\mu_{t+1} \text{ is limit}.
\end{cases} $$

\begin{Def}
	A finite or countably infinite chain of augmentations as in (\ref{eqMLVChain}) is a \textbf{Mac Lane-Vaquié (MLV) chain}  if every augmentation step satisfies:
	\begin{itemize}
		\item If $\mu_t\rightarrow \mu_{t+1}$ is ordinary, then $\deg(\mu_t)<\deg(\Phi_{\mu_t,\mu_{t+1}})$.
		
		\item If $\mu_t\rightarrow \mu_{t+1}$ is limit, then $\deg(\mu_t) = \deg(\Phi_{\mu_t,\mu_{t+1}})$ and $\phi_t\not\in \Phi_{\mu_t,\mu_{t+1}}$.

	\end{itemize}
	A Mac Lane-Vaquié chain is \textbf{complete} if the valuation $\mu_0$ has depth zero.
\end{Def}

\begin{Obs}
	A MLV chain can be composed by a mix of ordinary and limit augmentation. Moreover, since we consider chains as in (\ref{eqMLVChain}), if $\mu_t\to \mu_{t+1}$ is limit, then the continuous family of augmentations associated to $\mu_t$ is essential, i.e., the degree $\deg(\phi_{t+1})$ of the limit key polynomial is strictly greater than the stable degree $\deg(\mathcal{A})$.  That is, if $\mu_t\to \mu_{t+1}$ is either ordinary or limit augmentation, then $\deg(\mu_t)=\deg(\phi_t)<\deg(\phi_{t+1})=\deg(\mu_{t+1})$ \cite[p. 14]{Nart}.
\end{Obs}

\begin{Teo}\label{teoNartMacLaneVaquieChains}\cite[Theorem 4.3]{Nart}
	Every valuation $\nu$ on $K[x]$ falls in one of the following cases.
	
	\begin{enumerate}
		\item It is the last valuation of a complete finite Mac Lane-Vaquié chain:
	$$\mu_0 \overset{\phi_1,\gamma_1} {\longrightarrow}\mu_1 \overset{\phi_2,\gamma_2} {\longrightarrow} \cdots  \overset{} {\longrightarrow} \mu_{r-1} \overset{\phi_r,\gamma_r} {\longrightarrow} \mu_r=\nu.$$
	
	\item It is the stable limit of a continuous family $\mathcal{A}=(\rho_i)_{i\in A}$ of augmentations of some valuation $\mu_r$ which falls in case 1:
	$$\mu_0 \overset{\phi_1,\gamma_1} {\longrightarrow}\mu_1 \overset{\phi_2,\gamma_2} {\longrightarrow} \cdots  \overset{} {\longrightarrow} \mu_{r-1} \overset{\phi_r,\gamma_r} {\longrightarrow} \mu_r\overset{(\rho_i)_{i\in A}} {\longrightarrow}\rho_{\mathcal{A}}=\nu,$$
	such that the class $\Phi_{\mu_r,\nu}$ has degree $\deg(\mu_r)$ and $\phi_r\not\in \Phi_{\mu_r,\nu}$.

	\item It is the stable limit of a complete infinite Mac Lane-Vaquié chain:
	$$\mu_0 \overset{\phi_1,\gamma_1} {\longrightarrow}\mu_1 \overset{\phi_2,\gamma_2} {\longrightarrow} \cdots  \overset{} {\longrightarrow} \mu_{t-1} \overset{\phi_t,\gamma_t} {\longrightarrow} \mu_t\overset{} {\longrightarrow}\cdots $$
	\end{enumerate}
	
\end{Teo}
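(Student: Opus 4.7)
The plan is to construct, for the given valuation $\nu$ on $K[x]$, a distinguished Mac Lane--Vaqui\'e chain by a greedy procedure that, at each stage, realizes the maximum possible progress toward $\nu$. I would start by choosing a depth-zero valuation $\mu_0 \leq \nu$ — for instance, picking $a \in K$ and $\gamma_0 \in \Gamma_v$ so that $\mu_0 = \omega_{a,\gamma_0}$ satisfies $\mu_0(f) \leq \nu(f)$ for every $f \in K[x]$ (one can take $\gamma_0 = \nu(x-a)$ for any $a \in K$ with $\nu(x-a) \in \Gamma_v$, and adapt if no such $a$ exists). If $\mu_0 = \nu$ we are already in case (1); otherwise I proceed inductively.

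Assume $\mu_t < \nu$ with $\deg(\mu_t) = m_t$ has been constructed so that $\mu_t \leq \nu$ is part of a MLV chain. Examine the tangent direction $\Phi_{\mu_t,\nu}$, which by the cited properties consists of MLV key polynomials for $\mu_t$ forming a single equivalence class $[\phi]_{\mu_t}$ of common degree $d_t = \deg(\Phi_{\mu_t,\nu})$. Split into two cases. If $d_t > m_t$, choose any $\phi_{t+1} \in \Phi_{\mu_t,\nu}$ and set $\gamma_{t+1} = \nu(\phi_{t+1}) > \mu_t(\phi_{t+1})$, so that $\mu_{t+1} := [\mu_t; \phi_{t+1}, \gamma_{t+1}]$ is an ordinary augmentation satisfying $\mu_t < \mu_{t+1} \leq \nu$; the MLV hypothesis $\deg(\mu_t) < \deg(\Phi_{\mu_t,\mu_{t+1}})$ holds because $\Phi_{\mu_t,\mu_{t+1}} = \Phi_{\mu_t,\nu}$ (tangent directions are preserved along intermediate valuations). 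If instead $d_t = m_t$, a single ordinary augmentation would violate the MLV condition, so I form the continuous family $\mathcal{A} = (\rho_i)_{i \in A}$ of ordinary augmentations of $\mu_t$ using polynomials $\chi_i \in \Phi_{\mu_t,\nu}$ with values $\beta_i = \nu(\chi_i)$ strictly increasing and $\rho_i \leq \nu$; either $\nu$ is already the stable limit $\rho_\mathcal{A}$ of this family (placing us in case (2), with $\phi_r = \phi_t \notin \Phi_{\mu_r,\nu}$ automatic from the MLV structure of the preceding chain), or some polynomial is $\mathcal{A}$-unstable and detected by $\nu$, so I pick $\phi_{t+1} \in \mathrm{KP}_\infty(\mathcal{A})$, form $\mu_{t+1} := [\mathcal{A}; \phi_{t+1}, \nu(\phi_{t+1})]$, and check that $\mathcal{A}$ is essential and $\phi_t \notin \Phi_{\mu_t,\mu_{t+1}} = [\chi_i]_{\mu_t}$.

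The procedure terminates or continues in three mutually exclusive ways, corresponding exactly to the three cases of the theorem: (i) at some step $\mu_r = \nu$, giving case (1); (ii) at some step I construct the continuous family $\mathcal{A}$ whose stable limit is $\nu$, giving case (2) on top of the finite MLV chain built up to $\mu_r$; (iii) the construction never stops, producing an infinite MLV chain whose stable limit must be $\nu$, giving case (3). To confirm that the stable limit in case (iii) is indeed $\nu$, one shows $\mu_t(f) = \nu(f)$ for every $f \in K[x]$ once $t$ is large enough compared with $\deg(f)$, using that the sequence $\deg(\mu_t)$ is strictly increasing (a feature recorded in the remark following the definition of MLV chains).

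The main obstacle is the second case of the inductive step: showing that when $\deg(\Phi_{\mu_t,\nu}) = \deg(\mu_t)$, one can genuinely organize the intermediate ordinary augmentations $\rho_i = [\mu_t; \chi_i, \beta_i]$ into a well-ordered continuous family bounded by $\nu$, and then decide correctly between stopping (case 2) and performing a limit augmentation. This requires the key fact that two MLV key polynomials of the same degree in the same tangent class satisfy $\chi_j \in \mathrm{KP}(\rho_i)$ and $\rho_j = [\rho_i; \chi_j, \beta_j]$ for $i < j$, together with a careful analysis of $\mathcal{A}$-stability of polynomials of degree $< \deg(\phi_{t+1})$ to guarantee that $\mathcal{A}$ is essential and that the MLV condition $\phi_t \notin \Phi_{\mu_t,\mu_{t+1}}$ is preserved. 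Once this step is justified, the rest of the argument is essentially a bookkeeping verification of the MLV conditions and the termination alternative.
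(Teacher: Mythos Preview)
The paper does not contain a proof of this theorem: it is stated with the citation \cite[Theorem 4.3]{Nart} and quoted as an external result, so there is no ``paper's own proof'' to compare against. Your sketch is, in fact, essentially the strategy Nart uses in the cited reference: starting from a depth-zero valuation below $\nu$, at each step inspect the tangent direction $\Phi_{\mu_t,\nu}$, perform an ordinary augmentation when its degree strictly exceeds $\deg(\mu_t)$, and otherwise build a continuous family based on $\mu_t$ that either has $\nu$ as stable limit or admits a limit augmentation toward $\nu$. The trichotomy in the statement then corresponds exactly to the three possible behaviours of this greedy process.

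Your identification of the delicate point is accurate: organizing the intermediate augmentations into a genuine continuous family when $\deg(\Phi_{\mu_t,\nu})=\deg(\mu_t)$, and verifying essentiality together with $\phi_t\notin\Phi_{\mu_t,\mu_{t+1}}$, is where the real work lies. One small caution: in your base step, taking $\gamma_0=\nu(x-a)$ may give $\gamma_0=\infty$ (if $x-a$ generates $\supp(\nu)$), in which case $\mu_0=\nu$ already and you are in case~(1); you should also note that the condition $\phi_t\notin\Phi_{\mu_t,\nu}$ in case~(2) is not entirely automatic and may require replacing $\phi_t$ within its $\mu_{t-1}$-class before entering the continuous family, a refinement Nart handles explicitly.
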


\vspace{0.3cm}

By  \cite[Lemma 4.5]{Nart}, $\nu$ falls in Case (1) of Theorem \ref{teoNartMacLaneVaquieChains}  above if and only if $ {\rm KP}(\nu)\neq \emptyset$ or $\supp(\nu)\neq (0)$.

\vspace{0.3cm}

Take 
$$\mu_0 \overset{\phi_1,\gamma_1} {\longrightarrow}\mu_1 \overset{\phi_2,\gamma_2} {\longrightarrow} \cdots  \overset{} {\longrightarrow} \mu_{t-1} \overset{\phi_t,\gamma_t} {\longrightarrow} \mu_t\cdots$$
a Mac Lane-Vaquié chain of lenght $r\in \N_\infty$. Let us fix the following notations and highlight some properties.
\begin{itemize}
	\item $m_t = \deg(\mu_t)=\deg(\phi_t). $ If $\mu_t \rightarrow \mu_{t+1}$ is ordinary, then  $m_{t+1}=\deg(\Phi_{\mu_{t},\mu_{t+1}}).$
	
	\item $e_t = (\Gamma_{\mu_t}:\Gamma_{\mu_{t-1}}).$
	
	\item $\Delta_t = \Delta_{\mu_t}$, the subring of  zero grade elements of $\mathcal{G}_{\mu_t}$.
	
	\item $\kappa_t = \kappa(\mu_t)$, the algebraic closure of $Kv$ in $\Delta_t$.
	
	\item $f_t = [\kappa_{t+1}:\kappa_t] = \deg(\Phi_{\mu_t,\nu})/(e_tm_t)$ \cite[Lemma 5.2]{Nart}.
\end{itemize}

Consider $v_t = [\mu_t;\phi_t,\infty]$ on $K[x]$ for each $t\geq 0$. We have $\Gamma_{v_t} = \linebreak\{\mu_{t-1}(h)\mid h\neq 0 \text{ and } \deg(h)<\deg(\phi_t)\}=\Gamma_{\mu_{t-1}}$ and $K[x]v_t=\kappa_t$ \cite[Propositions 2.12 and 3.6]{NartKeyPolyValuedFields}. Also
$$e(v_t/v) = e_0\cdots e_{t-1} \text{ and } f(v_t/v) = f_0\cdots f_{t-1}. $$

\subsection{The defect formula} We will end this section with a formula that allows us to calculate the defect of a finite valued field extension by using a chain of augmentations. 

\vspace{0.3cm}

We start with a lemma from which we will derive the notion of defect of an augmentation. We define the $\boldsymbol{\nu}$\textbf{-degree} of $f\in K[x]$, denoted by $\deg_\nu(f)$, as the degree of $\inv_\nu(f)$ as a polynomial in $\inv_\nu(\phi)$ over $\mathcal{G}_\nu^0$ (the subalgebra generated by the set of all homogeneous units of $\mathcal{G}_\nu$) for some $\phi\in {\rm KP}(\nu)$ of minimal degree. See more details in \cite[Remark 16]{spivamahboubkeypoly}, \cite[Proposition 3.5]{NartKeyPolyValuedFields} or \cite[Proposition 4.5]{josneimonomial}.

\begin{Lema} \cite[Lemma 6.1]{NartJosneiDefectFormula}
	Let $\mu\to \nu$ be an augmentation. Let $\Phi_{\mu,\nu}$ be the corresponding tangent direction. For $Q\in \Phi_{\mu,\nu}$ set $\rho_Q = [\mu; Q, \nu(Q)]$. Let $\phi\in K[x]$ be either a MLV key polynomial of minimal degree of $\nu$ or $\supp(\nu)=(\phi)$. Then the positive integer $$d = \min\{ \deg_{\rho_Q}(\phi)\mid Q\in \Phi_{\mu,\nu}\} $$
	is independent of the choice of $\phi$. Moreover, the set of all $Q\in \Phi_{\mu,\nu}$ such that $\deg_{\rho_Q}(\phi)=d$ is cofinal in $\Phi_{\mu,\nu}$.
\end{Lema}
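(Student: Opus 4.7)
The plan is to first pin down the meaning of $\deg_{\rho_Q}(\phi)$: since $\rho_Q = [\mu; Q, \nu(Q)]$ is an ordinary augmentation having $Q$ as a MLV key polynomial of minimal degree, I read it as the degree of $\inv_{\rho_Q}(\phi)$ in the natural polynomial structure that $\inv_{\rho_Q}(Q)$ imposes on $\mathcal{G}_{\rho_Q}$; in the $Q$-expansion $\phi = \sum_{k=0}^{r} f_k Q^k$ with $\deg(f_k) < \deg(Q)$, this amounts to the largest index $k^\star$ at which $\mu(f_k) + k\nu(Q) = \rho_Q(\phi)$. This quantity genuinely depends on $Q$ through $\nu(Q)$, which is precisely why the minimum over $Q$ is nontrivial.

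To establish positivity of $d$, I would argue that $k^\star = 0$ cannot hold cofinally in $\Phi_{\mu,\nu}$. Indeed, if $k^\star = 0$, the principal vertex of the $Q$-Newton polygon of $\phi$ would sit at $k = 0$, forcing $\rho_Q(\phi) = \mu(f_0) \leq \mu(\phi)$. Since $\phi$ is either a MLV key polynomial of $\nu$ of minimal degree or generates $\supp(\nu)$, one has $\mu(\phi) < \nu(\phi)$; combined with $\mu < \rho_Q \leq \nu$ and the $\mu$-minimality of the $Q$-expansion this yields an incompatibility that rules out $k^\star = 0$. Hence $\deg_{\rho_Q}(\phi) \geq 1$ for all $Q \in \Phi_{\mu,\nu}$ and $d \geq 1$.

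Independence from $\phi$ reduces to comparing two valid candidates $\phi_1, \phi_2$. If both are minimal MLV key polynomials of $\nu$, then $\phi_2 = \phi_1 + h$ with $\nu(h) > \nu(\phi_1)$; if one generates $\supp(\nu)$, the other must agree with it up to a unit of $K$. Applying the comparison morphism $\phi_{\rho_Q,\nu}$ of (\ref{eqPhiij}) and expanding $h$ in the $Q$-basis, for $Q$ with $\nu(Q)$ sufficiently large every nonzero term of $h$ contributes strictly above $\rho_Q(\phi_i)$, so the indices realizing the $\rho_Q$-minimum coincide for $\phi_1$ and $\phi_2$. Hence $\deg_{\rho_Q}(\phi_1) = \deg_{\rho_Q}(\phi_2)$ on a cofinal subset, and the minima over $Q$ agree.

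For the cofinality statement, I would order $\Phi_{\mu,\nu}$ by $Q \preceq Q'$ iff $\nu(Q) \leq \nu(Q')$; the results of Section~\ref{Augmentations} on continuous families and tangent directions guarantee this order behaves well. As $\nu(Q)$ increases, the slope $-\nu(Q)$ of the $Q$-Newton polygon of $\phi$ steepens, so the set of indices realizing the minimum can only shrink and $k^\star$ is monotone non-increasing along $\preceq$. Being a non-negative integer bounded below by $d$, the function $Q \mapsto k^\star$ must stabilize at $d$ on a cofinal subset. The main obstacle will be translating the change-of-variable $Q \leadsto Q + a$ (with $\deg a < \deg Q$) into an explicit comparison of the $Q$- and $(Q+a)$-expansions of $\phi$, so that the monotonicity argument is clean; this requires a careful binomial bookkeeping combined with the $\mu$-minimality of $Q$ and the fact that $\deg(\Phi_{\mu,\nu})$ is constant on the tangent direction.
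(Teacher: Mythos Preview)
The paper does not prove this lemma: it is quoted verbatim from \cite[Lemma 6.1]{NartJosneiDefectFormula} and stated without proof, so there is no in-paper argument to compare your proposal against.

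As for your sketch on its own merits, the overall strategy (read $\deg_{\rho_Q}(\phi)$ via the $Q$-Newton polygon, show it is $\geq 1$, then prove independence and cofinality) is the natural one, but two steps are not yet justified. First, your independence argument assumes that any two admissible choices $\phi_1,\phi_2$ satisfy $\nu(\phi_2-\phi_1)>\nu(\phi_1)$; this is not automatic for two MLV key polynomials of $\nu$ of minimal degree, since distinct $\nu$-equivalence classes of minimal degree may exist. You need an argument that works for $\phi_1\nsim_\nu\phi_2$ as well, typically by showing directly that $\deg_{\rho_Q}$ only sees $\deg(\nu)$-level data that is the same for all such $\phi$. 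Second, your monotonicity claim ``as $\nu(Q)$ increases, $k^\star$ is non-increasing'' treats the $Q$-expansion of $\phi$ as if only the slope $-\nu(Q)$ were varying, but replacing $Q$ by $Q'\in\Phi_{\mu,\nu}$ changes all the coefficients $f_k$ too; you yourself flag this as ``the main obstacle'' at the end, and indeed it is the heart of the proof rather than a bookkeeping detail. Until that change-of-basis comparison is carried out, the cofinality conclusion is unsupported.
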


\begin{Def}
	The stable value $d$ is called the \textbf{defect} of the augmentation. We denote it by $d(\mu\to\nu)$.
\end{Def}

Some properties of this notion of defect are listed in the lemma below.

\begin{Lema}\cite[Lemmas 6.3 and 6.4]{NartJosneiDefectFormula}
	\begin{enumerate}
		\item 	If $\mu\to \nu$ is an ordinary augmentation, then $d(\mu\to \nu)=1$. 
		
		\item Suppose $(K,v)$ henselian. For all augmentations $\mu\to \nu$ we have 
		$$d(\mu\to\nu) = \deg(\nu)/\deg(\Phi_{\mu,\nu}). $$
		In particular, if $\mu\to\nu$ is a limit augmentation, then $$d(\mu\to\nu) = \deg(\nu)/\deg(\mu).$$
	\end{enumerate}

\end{Lema}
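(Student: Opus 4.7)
My plan is to handle each part separately, with the main technical content concentrated in the henselian/limit case.

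Part (1). Suppose $\mu\to\nu$ is ordinary, so $\nu=[\mu;\phi_0,\gamma]$ with $\phi_0\in \mathrm{KP}(\mu)$. When $\gamma<\infty$, $\phi_0$ is itself a MLV key polynomial of minimal degree for $\nu$, so $\deg(\nu)=\deg(\phi_0)$; choose $\phi=\phi_0$ (when $\gamma=\infty$, $\supp(\nu)=\phi_0 K[x]$ and the same choice works). Since $\Phi_{\mu,\nu}=[\phi_0]_\mu$, every $Q\in \Phi_{\mu,\nu}$ is monic of degree $\deg(\phi_0)$, so the $Q$-expansion of $\phi_0$ has only two terms: $\phi_0=(\phi_0-Q)+1\cdot Q$ with $\deg(\phi_0-Q)<\deg(Q)$. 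This already bounds $\deg_{\rho_Q}(\phi_0)\leq 1$, and checking for cofinally many $Q$ that the top term determines $\rho_Q(\phi_0)$ yields $\deg_{\rho_Q}(\phi_0)=1$, hence $d(\mu\to\nu)=1$.

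Part (2). The ordinary case is immediate from part (1), because $\deg(\Phi_{\mu,\nu})=\deg(\phi_0)=\deg(\nu)$ gives ratio $1$. The substantive case is the limit augmentation $\nu=[\mathcal{A};\phi,\gamma]$ with $\phi\in \mathrm{KP}_\infty(\mathcal{A})$, so $\deg(\phi)=\deg(\nu)$ and $\deg(\Phi_{\mu,\nu})=\deg(\mathcal{A})=\deg(\mu)$. I would take $Q=\chi_i$ for large $i\in A$, so $\deg(Q)=\deg(\mu)$; MLV-chain divisibility ensures that $s:=\deg(\phi)/\deg(Q)=\deg(\nu)/\deg(\mu)$ is a positive integer. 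Writing the $Q$-expansion $\phi=\sum_{k=0}^s f_k Q^k$, each coefficient $f_k$ has degree strictly less than $\deg(\mathcal{A})$, hence is $\mathcal{A}$-stable, and for $i$ sufficiently large I can compute $\rho_Q(\phi)=\min_{0\leq k\leq s}\{\rho_{\mathcal{A}}(f_k)+k\,\nu(Q)\}$.

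The decisive step is to show that cofinally in $A$ the above minimum is attained uniquely at $k=s$, yielding $\deg_{\rho_Q}(\phi)=s=\deg(\nu)/\deg(\Phi_{\mu,\nu})$. Since the values $\rho_{\mathcal{A}}(f_k)$ are fixed while $\nu(Q)=\nu(\chi_i)$ grows unboundedly relative to $\Gamma_\mu$ along the essential family, a direct linear comparison in the group $\Gamma$ should push any candidate minimum off every intermediate index $0<k<s$. The henselian hypothesis enters to guarantee that $f_s$ does not vanish in the graded sense and that no degeneracy appears: roughly, $\phi$ behaves as the minimal polynomial of a genuine algebraic element over $K=K^h$, so no accidental $\nu$-equivalence among lower terms prevents the top index from witnessing the true minimum.

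The main obstacle is precisely this last verification: ruling out, cofinally in $A$, that finitely many intermediate coefficients $f_k$ conspire to tie or beat the $k=s$ term. This is the point at which one must exploit both the essentiality of $\mathcal{A}$ (which controls $m_\infty$ relative to $\deg(\mathcal{A})$) and henselianity of $(K,v)$ (which rigidifies the structure of $\phi$ via the unique extension of $v$ to $\overline{K}$). Once settled, $d(\mu\to\nu)=\deg(\nu)/\deg(\Phi_{\mu,\nu})$ in all cases, and it specializes to $\deg(\nu)/\deg(\mu)$ for limit augmentations since then $\deg(\Phi_{\mu,\nu})=\deg(\mu)$.
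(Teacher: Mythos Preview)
The paper does not prove this lemma; it is imported from \cite[Lemmas~6.3 and~6.4]{NartJosneiDefectFormula} and used as a black box, so there is no in-paper argument to compare your proposal against.

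On your sketch itself: Part~(1) is correct and can be shortened by taking $Q=\phi_0\in\Phi_{\mu,\nu}=[\phi_0]_\mu$, so that the $Q$-expansion of $\phi_0$ is the single term $1\cdot Q$ and $\deg_{\rho_Q}(\phi_0)=1$; since $d(\mu\to\nu)$ is by definition a positive integer, this gives $d(\mu\to\nu)=1$ immediately.

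Part~(2) has a genuine gap. The coefficients $f_k$ in the $\chi_i$-expansion of $\phi$ depend on $i$, so the sentence ``the values $\rho_{\mathcal A}(f_k)$ are fixed while $\nu(Q)=\nu(\chi_i)$ grows'' conflates a fixed-$i$ quantity with a varying-$i$ parameter, and the linear comparison you propose is not well-posed as written. Even if one repairs this, the growth heuristic is at best unclear: for the minimum of $k\mapsto \mu(f_k)+k\beta_i$ to sit at $k=s$ one needs $(s-k)\beta_i\le\mu(f_k)$ for each $k<s$, and a priori increasing $\beta_i$ works \emph{against} this inequality rather than for it; any compensating growth of $\mu(f_k)$ would have to come precisely from the $i$-dependence you suppressed. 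The actual argument in \cite{NartJosneiDefectFormula} does not run through such a limiting comparison. Henselianity enters because the unique extension of $v$ to $\overline K$ forces the irreducible polynomial $\phi$ to have a one-sided Newton polygon with respect to each $\rho_Q$ (all roots of $\phi$ are Galois-conjugate and carry the same valuation data), and this structural fact is what yields $\deg_{\rho_Q}(\phi)=\deg(\phi)/\deg(Q)$ directly. Your phrase that henselianity ``rigidifies the structure of $\phi$'' is in the right spirit, but it does not isolate this mechanism, and without it the argument does not close.
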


\begin{Obs}
Suppose $(K,v)$ henselian.	If $\mu\to\nu$ is a limit augmentation which comes from an essential continuous family of augmentations based on $\mu$, then \linebreak $d(\mu\to\nu)=\deg(\nu)/\deg(\mu)>1$. Hence, for a MLV chain, $\mu_t\to\mu_{t+1}$ is ordinary if and only if $d(\mu_t\to\mu_{t+1})=1$. 
\end{Obs}

We say that an augmentation $\mu\to\nu$ is \textbf{proper} when the class $\Phi_{\mu,\nu}$ is proper, that is, when there exists $\varphi\in {\rm KP}(\mu)$ such that $\varphi\nsim_\mu \phi$ for some $\phi\in \Phi_{\mu,\nu}$.
A chain of augmentation is said to be proper if all of its augmentations are proper. By \cite{Nart}, all MLV chains are proper.

\vspace{0.3cm}

The next results state a formula for the defect of a finite simple field extension in terms of the defect of augmentations. 

\begin{Teo}\cite[Theorem 6.14]{NartJosneiDefectFormula}
	Let $w$ be an extension of $v$ to a finite simple extension $L\mid K$. Let $\mu$ be the valuation on $K[x]$ induced by $w$. For any proper chain of augmentation ending on $\mu$ we have 
	$$d(w/v) = d(\mu_0 \rightarrow \mu_1)\cdots d(\mu_r \rightarrow \mu). $$
\end{Teo}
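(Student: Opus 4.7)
The plan is to reduce to the henselian case and then telescope a per-step identity relating $\deg(\mu_{t+1})$ to $\deg(\mu_t)$, $e_t$, $f_t$ and $d(\mu_t\to\mu_{t+1})$, finally matching against the fundamental equality $[L:K]=e(w/v)f(w/v)d(w/v)$.

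First, I would pass to henselizations $K^h\subset L^h$: the invariants $e$, $f$ and $d$ of $w/v$ are preserved and $w^h$ becomes the unique extension of $v^h$. The given proper chain on $K[x]$ carries over to a proper chain on $K^h[x]$ with the same step defects, since tangent directions $\Phi_{\mu_t,\mu_{t+1}}$, essential continuous families and the minimum $\min\{\deg_{\rho_Q}(\phi)\mid Q\in\Phi_{\mu_t,\mu_{t+1}}\}$ defining the step defect are controlled by a cofinal subset of $\Phi_{\mu_t,\mu_{t+1}}$ that is stable under base change to $K^h$. In the henselian setting we have $[L:K]=\deg(\mu)=m_r$, and the clean formula $d(\mu_t\to\mu_{t+1})=\deg(\mu_{t+1})/\deg(\Phi_{\mu_t,\mu_{t+1}})$ from the lemma immediately preceding the theorem becomes available.

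Next I would establish the per-step identity
$$m_{t+1}=e_t\,f_t\,m_t\cdot d(\mu_t\to\mu_{t+1}),$$
using $\deg(\Phi_{\mu_t,\mu_{t+1}})=e_t m_t f_t$ from the list of properties after Theorem \ref{teoNartMacLaneVaquieChains}. For an ordinary step, $m_{t+1}=\deg(\Phi_{\mu_t,\mu_{t+1}})$ and $d(\mu_t\to\mu_{t+1})=1$, so the identity is immediate. For a limit step, $\deg(\Phi_{\mu_t,\mu_{t+1}})=\deg(\mathcal{A})=m_t$ (forcing $e_t=f_t=1$ in the henselian case), while $d(\mu_t\to\mu_{t+1})=m_{t+1}/m_t$; again the identity holds. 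Telescoping over $t=0,\ldots,r-1$ with $m_0=1$ and $m_r=[L:K]$ yields
$$[L:K]=\left(\prod_{t=0}^{r-1}e_t\right)\left(\prod_{t=0}^{r-1}f_t\right)\prod_{t=0}^{r-1}d(\mu_t\to\mu_{t+1})=e(w/v)\,f(w/v)\prod_{t=0}^{r-1}d(\mu_t\to\mu_{t+1}),$$
where the last equality uses the multiplicativity of ramification index and residue degree along the MLV chain recorded at the end of Subsection \ref{SubSecMCVChain}. Comparing with $[L:K]=e(w/v)f(w/v)d(w/v)$ in the henselian case isolates the desired formula for $d(w/v)$.

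The main obstacle I foresee is the transfer of the chain across henselization: one must argue that the resulting chain on $K^h[x]$ is still a \emph{proper} MLV chain with exactly the same step defects, which in particular requires that no new tangent directions or essential limit families appear over $K^h$ that would alter the picture. A secondary subtlety is that the statement is claimed for \emph{any} proper chain ending at $\mu$, so independence from the choice of chain must be verified; I would handle this via common refinements of two competing chains together with the cofinality clause in the lemma defining $d(\mu\to\nu)$, ensuring that the telescoped product of step defects is an invariant of the pair $(w,v)$.
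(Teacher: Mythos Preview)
The paper does not contain a proof of this theorem: it is quoted verbatim as \cite[Theorem~6.14]{NartJosneiDefectFormula} and used as a black box, so there is no ``paper's own proof'' to compare against. What you have written is a plausible reconstruction of how the cited result is obtained in the source paper, and the telescoping argument via $m_{t+1}=e_tf_tm_t\,d(\mu_t\to\mu_{t+1})$, together with $e(w/v)=\prod e_t$, $f(w/v)=\prod f_t$ and $m_r=[L:K]$ in the henselian case, is indeed the core mechanism.

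Two points deserve attention if you want your sketch to stand on its own. First, the invariants $e_t,f_t$ and the identity $f_t=\deg(\Phi_{\mu_t,\nu})/(e_tm_t)$ quoted after Theorem~\ref{teoNartMacLaneVaquieChains} are stated in this paper only for MLV chains, and you tacitly use $m_0=1$, i.e.\ a \emph{complete} MLV chain; the theorem however is asserted for \emph{any} proper chain ending at $\mu$. Your proposed fix via common refinements is the right idea, but it is where the real work lies and is not a triviality. Second, the step you flag as the ``main obstacle'' --- that passing to $K^h$ preserves properness of the chain and each step defect $d(\mu_t\to\mu_{t+1})$ --- is precisely the substantive content of the reduction and is handled in \cite{NartJosneiDefectFormula} by a separate analysis; it cannot be dismissed by a cofinality remark alone. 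In short, your outline is correct in spirit and matches the strategy of the cited source, but the two issues you yourself identify are genuine and are exactly where the cited proof invests its effort.
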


\begin{Cor}\label{CorDefeitoProdGrausNartJosnei}\cite[Corollary 6.16]{NartJosneiDefectFormula}
	Suppose that $(K,v)$ is henselian. Let $w$ be an extension of $v$ to a finite simple extension $L\mid K$. Let $\mu$ be the valuation on $K[x]$ induced by $w$. For any proper chain of augmentation ending on $\mu$ we have $$d(w/ v) = \prod_{t\in J}\frac{\deg(\mu_{t+1})}{\deg(\mu_t)} $$
	where $J$ contains all indices $t$ such that $\mu_t\rightarrow \mu_{t+1}$ is a limit augmentation.
\end{Cor}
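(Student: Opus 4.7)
The proof will be essentially a direct application of the preceding Theorem [Theorem 6.14 of \cite{NartJosneiDefectFormula}] combined with the lemma immediately before it, which catalogs the defects of the two types of augmentations under the henselian hypothesis.

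The plan is as follows. First, I would fix a proper chain of augmentations
$$\mu_0 \overset{\phi_1,\gamma_1}{\longrightarrow} \mu_1 \overset{\phi_2,\gamma_2}{\longrightarrow} \cdots \overset{\phi_r,\gamma_r}{\longrightarrow} \mu_r \overset{}{\longrightarrow} \mu$$
ending at $\mu$ (the existence of such a chain is guaranteed by Theorem \ref{teoNartMacLaneVaquieChains}, since $\mu$ has nontrivial support coming from the minimal polynomial of a generator of $L\mid K$). Applying the defect formula from the preceding Theorem, we obtain
$$d(w/v) = \prod_{t} d(\mu_t \to \mu_{t+1}),$$
where the product runs over all augmentation steps in the chain.

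Next, I would split this product according to the type of each augmentation step, writing $t \in J$ when $\mu_t\to\mu_{t+1}$ is limit and $t\notin J$ when it is ordinary. By part (1) of the lemma on defects of augmentations, every ordinary step satisfies $d(\mu_t\to\mu_{t+1}) = 1$, so those factors drop out. By part (2) of the same lemma, since $(K,v)$ is henselian, each limit step contributes
$$d(\mu_t \to \mu_{t+1}) = \frac{\deg(\mu_{t+1})}{\deg(\mu_t)}.$$
Substituting these values yields the desired formula
$$d(w/v) = \prod_{t\in J}\frac{\deg(\mu_{t+1})}{\deg(\mu_t)}.$$

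There is no real obstacle here; the corollary is essentially a bookkeeping exercise that repackages the general defect formula using the henselian-case evaluation of augmentation defects. The only point to be mildly careful about is that the chain may be finite or that the last step $\mu_r\to\mu$ (where $\mu$ itself has nontrivial support $\phi K[x]$) is also covered by the formulas, which is precisely why the lemma was stated allowing $\phi$ to be either a MLV key polynomial of minimal degree of the target or a generator of the support.
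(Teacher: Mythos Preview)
Your argument is correct and is precisely the intended derivation: apply the defect formula from the preceding theorem and then evaluate each factor using the lemma on defects of augmentations (ordinary steps contribute $1$, limit steps contribute $\deg(\mu_{t+1})/\deg(\mu_t)$ in the henselian case). Note that the paper does not supply its own proof of this corollary---it is quoted directly from \cite[Corollary~6.16]{NartJosneiDefectFormula}---so there is nothing further to compare against; your sketch matches the standard one-line deduction in that reference.
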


\begin{Obs}
	The above corollary is a version of \cite[Corollary 6.1]{Nart}, where we note that it is not necessary to suppose $(K,v)$ henselian. It is sufficient to suppose that the extension $w$ of $v$ is unique. A similar result can be found in \cite[Corollary 2.10]{Vaq3}.
\end{Obs}

\section{Complete sequences of key polynomials}\label{CompleteSets}

\subsection{Key polynomials}

Fix a valuation  $\nu$ on $K[x]$, the ring of polynomials in one indeterminate over the field $K$. Our definition of key polynomial relates to the ones in \cite{josneiKeyPolyPropriedades} and  \cite{spivamahboub}.  Fix an algebraic closure $\overline{K}$  for $K$ and fix  a valuation $\overline{\nu}$ on $\overline{K}[x]$ such that $\overline{\nu}|_{K[x]}=\nu$.

\vspace{0.2cm}

	Let $f\in K[x]$ be a non-zero polynomial.
	
	\begin{itemize}
		\item If $\deg(f)>0$, set 
		$$\delta(f):=\max\{\overline{\nu}(x-a)\mid a\in \overline{K} \text{ and }f(a)=0\}.$$
		
		\item If $\deg(f)=0$, set $\delta(f) = -\infty$.
		
	\end{itemize}

\begin{Obs} According to \cite{josneiKeyPolyMinimalPairs}, $\delta(f)$ does not depend on the choice of the algebraic closure $\overline{K}$ or the extension $\overline{\nu}$ of $\nu$. 
	
\end{Obs}

\begin{Def}\label{defiPoliChave}
	A monic polynomial $Q\in K[x]$ is a  \textbf{(abstract) key polynomial} of level $\delta(Q)$ if, for every $f\in K[x]$,
	$$\delta(f)\geq \delta(Q) \Longrightarrow \deg(f)\geq \deg(Q). $$
\end{Def}

	Let $q\in K[x]$ be a non-constant polynomial and $\nu$ a valuation on $K[x]$. 
	For a given $f\in \K[x]$, denote by $f_0, \ldots, f_r$ the coefficients of the $q$-expansion of $f$. 
	The map 
	$$\nu_q(f):=\underset{0\leq k\leq r}{\min} \{ \nu(f_kq^k)  \},$$
is called the \textbf{truncation} of $\nu$ at $q$.
This map is not always a valuation, as we can see in \cite[Example 2.4]{josneiKeyPolyPropriedades}.   If $Q$ is a key polynomial, then $\nu_Q$ is a valuation on $K[x]$ \cite[Proposition 2.6]{josneiKeyPolyMinimalPairs} and $Q\in {\rm KP}(\mu_Q)$  \cite[Corollary 4.7]{josneimonomial}.

\vspace{0.3cm} 	

In the following lemmas, we state some properties of key polynomials and truncations.

\begin{Lema}\label{prop3itensQQlinhaPoliChaves}\cite[Proposition 2.10]{josneiKeyPolyPropriedades}
	Let $Q,Q'\in K[x]$  be key polynomials for $\nu$.
	We have the following.
	\begin{enumerate}
		\item If $\deg(Q)<\deg(Q')$, then $\delta(Q)<\delta(Q')$.
		
		\item If $\delta(Q)<\delta(Q')$, then  $\nu_Q(Q')<\nu(Q')$.
		
		\item If $\deg(Q)=\deg(Q')$, then
		$$\nu(Q)<\nu(Q')\Longleftrightarrow \nu_Q(Q')<\nu(Q')\Longleftrightarrow \delta(Q)<\delta(Q'). $$
	\end{enumerate}
\end{Lema}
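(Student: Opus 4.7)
Part (1) is essentially definitional. Since $Q'$ is a key polynomial, its defining implication applied to $f = Q$ reads $\delta(Q) \geq \delta(Q') \Rightarrow \deg(Q) \geq \deg(Q')$, whose contrapositive is exactly part (1). The same argument with the roles of $Q$ and $Q'$ exchanged will also show that $\delta(Q) < \delta(Q')$ forces $\deg(Q) \leq \deg(Q')$, a fact that makes the $Q$-expansion $Q' = q_0 + q_1 Q + \cdots + q_s Q^s$ available in parts (2) and (3).

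For part (2), the plan is to fix a root $a \in \overline K$ of $Q$ realizing $\overline\nu(x-a) = \delta(Q)$ and to invoke the standard identification of the truncation $\nu_Q$ with the depth-zero valuation centered at $a$ of slope $\delta(Q)$: namely, $\nu_Q(f) = \min_k\bigl\{\overline\nu(c_k) + k\,\delta(Q)\bigr\}$, where $\sum_k c_k (x-a)^k$ is the Taylor expansion of $f$ about $a$ in $\overline K[x]$. Factoring $Q' = \prod_j (x-b_j)$ over $\overline K$ and exploiting multiplicativity, the comparison of $\nu_Q(Q')$ with $\nu(Q') = \overline\nu(Q')$ reduces to a factor-by-factor comparison of $\min\{\overline\nu(a-b_j),\,\delta(Q)\}$ with $\overline\nu(x-b_j)$. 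A short ultrametric case split shows these two values coincide whenever $\overline\nu(x-b_j) \leq \delta(Q)$, while $\min\{\overline\nu(a-b_j),\,\delta(Q)\} = \delta(Q) < \overline\nu(x-b_j)$ whenever $\overline\nu(x-b_j) > \delta(Q)$. Because $\delta(Q')$ is attained by some root $b$ of $Q'$ and $\delta(Q') > \delta(Q)$, the strict case occurs at least once, and summing over the factors yields $\nu_Q(Q') < \nu(Q')$.

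For part (3), the hypothesis $\deg(Q) = \deg(Q')$ combined with monicness collapses the $Q$-expansion to $Q' = Q + q_0$ with $\deg(q_0) < \deg(Q)$, so that $\nu_Q(Q') = \min\{\nu(Q),\nu(q_0)\}$ while $\nu(Q') = \nu(Q + q_0)$. A direct application of the strict triangle inequality yields the first equivalence $\nu(Q) < \nu(Q') \Leftrightarrow \nu_Q(Q') < \nu(Q')$: both conditions are equivalent to having $\nu(q_0) = \nu(Q)$ together with cancellation in the corresponding graded piece. For the second equivalence, the implication $\delta(Q) < \delta(Q') \Rightarrow \nu_Q(Q') < \nu(Q')$ is part (2). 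For its converse, I would argue by trichotomy on $\delta$: the case $\delta(Q) > \delta(Q')$ is excluded by applying part (2) together with the first equivalence to the pair $(Q',Q)$, and the case $\delta(Q) = \delta(Q')$ is excluded by rerunning the root analysis of part (2), where now no root $b_j$ of $Q'$ satisfies $\overline\nu(x-b_j) > \delta(Q)$, so every local factor agrees and $\nu_Q(Q') = \nu(Q')$, a contradiction.

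The main obstacle is the identification $\nu_Q(f) = \min_k\{\overline\nu(c_k) + k\,\delta(Q)\}$ underpinning the root analysis in parts (2) and (3). This rests on the fact that an abstract key polynomial $Q$ coincides with the minimal polynomial over $K$ of any root $a$ realizing $\overline\nu(x-a) = \delta(Q)$, so that $\nu_Q$ on $K[x]$ is forced to equal the restriction of the natural depth-zero valuation on $\overline K[x]$ centered at $a$ with slope $\delta(Q)$. Once this identification is granted, the rest of the argument is nothing more than the ultrametric comparison of the local values on the linear factors $(x-b_j)$.
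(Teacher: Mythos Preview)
The paper does not supply its own proof of this lemma: it is quoted verbatim as \cite[Proposition 2.10]{josneiKeyPolyPropriedades} and simply used as a black box. So there is no ``paper's proof'' to compare against; I can only assess your argument on its own merits.

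Your argument is correct. Part (1) is indeed immediate from the definition. For parts (2) and (3) you rely on the identification of $\nu_Q$ with the restriction to $K[x]$ of the depth-zero valuation $\omega_{a,\delta(Q)}$ on $\overline K[x]$, where $a$ is a root of $Q$ with $\overline\nu(x-a)=\delta(Q)$. This is exactly the ``minimal pair'' description of $\nu_Q$ established in \cite{josneiKeyPolyMinimalPairs} (and invoked elsewhere in the present paper), so you are on solid ground, though you should cite it rather than re-derive it. Once that identification is granted, your factor-by-factor ultrametric comparison is clean and correct: for each root $b_j$ of $Q'$ one has $\omega_{a,\delta(Q)}(x-b_j)=\min\{\overline\nu(a-b_j),\delta(Q)\}$, which equals $\overline\nu(x-b_j)$ when $\overline\nu(x-b_j)\le\delta(Q)$ and is strictly smaller otherwise; summing gives (2). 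The trichotomy in (3) is fine as written; note that the case $\delta(Q)=\delta(Q')$ could also be dispatched by Lemma~\ref{lempolichaveEpsilonmenor}(2), which gives $\nu_Q=\nu_{Q'}$ directly and hence $\nu_Q(Q')=\nu_{Q'}(Q')=\nu(Q')$.

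One small remark: in the original source \cite{josneiKeyPolyPropriedades} the invariant $\delta$ (there called $\epsilon$) is defined via Hasse--Schmidt derivatives rather than via roots in $\overline K$, and the proof there proceeds through properties of those derivatives. The root-based definition you are using is the one adopted in the present paper (following \cite{josneiKeyPolyMinimalPairs}), and your minimal-pair approach is the natural one in that setting. So your route is different from the original but entirely appropriate here.
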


For a key polynomial $Q$ we consider the following set:
$$\Psi(Q):=\{f\in K[x]\mid f \text{ is  monic with minimal degree such that } \nu_Q(f)<\nu(f)  \}. $$

In the language of Section \ref{Augmentations}, we have $\Psi(Q) = \Phi_{\nu_Q,\nu}$, the tangent direction of $\nu_Q$ determined by $\nu$. 

\begin{Lema}\label{lemaPsiQ} \cite[Lemma 2.11]{josneiKeyPolyPropriedades}
If $Q$ is a key polynomial, then every $Q'\in \Psi(Q)$ is also a key polynomial. Moreover, $\delta(Q)<\delta(Q')$.
\end{Lema}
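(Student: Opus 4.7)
The plan is to reduce both parts of the statement to a single technical characterization: \emph{for a key polynomial $Q$ and any $f\in K[x]$, one has $\nu_Q(f)<\nu(f)$ if and only if $\delta(f)>\delta(Q)$}. This equivalence is the backbone of the abstract key polynomial theory and captures the idea that the truncation $\nu_Q$ only ``sees'' roots of $f$ at $\overline{\nu}$-distance at most $\delta(Q)$ from the roots of $Q$: as soon as $f$ has a root closer to a root of $Q$ than $\delta(Q)$ is, the actual value $\nu(f)$ rises strictly above the truncated value $\nu_Q(f)$.

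Granting this characterization, the inequality $\delta(Q)<\delta(Q')$ is immediate: by definition of $\Psi(Q)$, we have $\nu_Q(Q')<\nu(Q')$, and the backward direction of the characterization yields $\delta(Q')>\delta(Q)$.

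For the claim that $Q'$ is itself a key polynomial, I would take any $f\in K[x]$ with $\delta(f)\geq \delta(Q')$ and argue that $\deg(f)\geq \deg(Q')$. Since $\delta(f)\geq \delta(Q')>\delta(Q)$, the forward direction of the characterization gives $\nu_Q(f)<\nu(f)$. Dividing $f$ by its leading coefficient shifts both $\nu_Q(f)$ and $\nu(f)$ by the same $v$-value and leaves $\delta(f)$ and $\deg(f)$ unchanged, so we may assume $f$ is monic. The minimality built into the definition of $\Psi(Q)$—namely that $\deg(Q')$ is the smallest degree among monic $g$ with $\nu_Q(g)<\nu(g)$—then forces $\deg(f)\geq \deg(Q')$, as required.

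The main obstacle is the characterization itself, and especially the forward implication $\delta(f)>\delta(Q)\Rightarrow \nu_Q(f)<\nu(f)$. The cleanest approach uses the minimal pair formalism: represent $\nu$ via a pair $(\eta,\gamma)$ with $\eta\in \overline{K}$ so that $\nu(g)=\overline{\nu}(g(\eta))$ for every $g\in K[x]$, and analyze the $Q$-expansion $f=\sum_k f_kQ^k$. When $\delta(f)>\delta(Q)$, some root $a$ of $f$ lies closer to $\eta$ than every root of $Q$, boosting $\overline{\nu}(f(\eta))$ strictly above the minimum $\min_k\{\overline{\nu}(f_k(\eta))+k\overline{\nu}(Q(\eta))\}=\nu_Q(f)$; the reverse implication is a similar but easier comparison. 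Ensuring that all arguments are independent of the choice of $\overline{\nu}$ and handling the multiplicity of roots is the subtle technical point, which is why this equivalence is standardly isolated as its own lemma in the literature (cf.\ \cite{josneiKeyPolyPropriedades}).
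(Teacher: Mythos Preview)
Your approach is correct and is essentially the standard one; the paper does not give its own proof but simply cites \cite[Lemma 2.11]{josneiKeyPolyPropriedades}, and the argument there proceeds exactly along the lines you outline. The equivalence $\nu_Q(f)<\nu(f)\Leftrightarrow \delta(f)>\delta(Q)$ for a key polynomial $Q$ is indeed the core technical input (it appears in \cite{josneiKeyPolyPropriedades} as the combination of Proposition~2.3/2.4 and the surrounding lemmas), and once it is in hand both assertions of the lemma follow precisely as you describe: the backward implication gives $\delta(Q')>\delta(Q)$, and the forward implication together with the degree-minimality defining $\Psi(Q)$ forces $\deg(f)\geq\deg(Q')$ whenever $\delta(f)\geq\delta(Q')$.
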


\subsection{Complete sequences of key polynomials}

\begin{Def}

	A set $\boldsymbol{Q}\subset K[x]$ is called a \textbf{complete set} for $\nu$ if for every $f\in \K[x]\setminus\{0\}$ there exists $q\in \boldsymbol{Q}$ such that 
	$$\deg(q)\leq \deg(f) \text{ and } \nu(f)=\nu_q(f). $$
	
	A set $\boldsymbol{Q} = \{Q_i\}_{i\in I}$  is said a \textbf{sequence of key polynomials} if every element $Q_i$ is a key polynomial, the set $I$ is well ordered and the map $i\to Q_i$ is order preserving (with respect to $\delta$). A sequence of key polynomials $\boldsymbol{Q} = \{Q_i\}_{i\in I}$ is a \textbf{complete sequence of key polynomials}  if the set $\boldsymbol{Q}$ is a complete set for $\nu$.
	
	\end{Def}

According to \cite[Theorem 1.1]{josneiKeyPolyPropriedades}, 
every valuation $\nu$ on $K[x]$ admits a complete sequence $\boldsymbol{Q}$ of key polynomials. Moreover, the next proposition gives us more details about $\boldsymbol{Q}$.

\begin{Prop}\label{propAdmissibleFamilyJosneiTrunc}
	\cite[Corollary 6.5]{josneimonomial} Let $\nu$ be a valuation on $K[x]$, set $\nu_0:=\nu|_K$. Then there exists a family $$ \mathcal F = \{(\nu_Q, Q, \nu(Q))  \}_ {Q\in \boldsymbol{Q}}$$
	such that for every $f\in K[x]$, there exists $Q\in \boldsymbol{Q}$ such that $\nu(f) = \nu_{Q'}(f)$ for every $Q'\in \boldsymbol{Q}$ with $\delta(Q)\leq \delta(Q')$ (in particular, $\boldsymbol{Q}$ is complete). The set $\boldsymbol{Q}$ can be chosen as 
	$$\boldsymbol{Q} = \bigcup_{i=1}^N I_i, $$
	where $N\in \N\cup \{\infty\}$, with the following properties.
	\begin{enumerate}
		\item $\boldsymbol{Q}$ admits a smallest element $Q_0$ (with respect to $\delta$) of the form $x-a\in K[x]$ and 
		$$\nu_{Q_0} = [\nu_0;\, \nu_{Q_0}(Q_0)=\nu(Q_0)]. $$
		
		\item For every $i$, $1\leq i <N$, we can write $I_i = A_i\cup B_i$ where
		\begin{itemize}
			\item $B_i = \{Q_{i,1}, \ldots, Q_{i,n_i}\}$ for some $n_i\in \N$ such that 
			$$\nu_{Q_{i,j+1}} = [\nu_{Q_{i,j}};\, \nu_{Q_{i,j+1}}(Q_{i,j+1})=\nu(Q_{i,j+1})] $$
			for every $j$, $1\leq j <n_i$, and 
			$$\deg(Q_{i,1})<\deg(Q_{i,2}) <\cdots < \deg(Q_{i, n_i});$$
			
			\item $\mathcal{A}_i=\{\nu_{Q}\}_{Q\in A_i}$ is an essential continuous family of augmentations based on $\nu_{Q_{i,n_i}}$ with $\deg(Q)>\deg(Q_{i,n_i})$ for every $Q\in A_i$. 
			
		\end{itemize}
	
	\item For every $i$, $1<i<N$, $I_i$ admits a first element $Q$ which is a MLV-limit key polynomial for $\mathcal{A}_{i-1}$.
	
	\item If $N<\infty$, then $I_N = A_N\cup B_N$ with
	\begin{itemize}
		\item $A_N$ and $B_N$ as in $(3)$, or
		
		\item $A_N = \emptyset$ and $B_N = \{Q_{N,1}, \ldots, Q_{N,L-1}\}$ for some $L\in \N\cup \{\infty\}$ such that 
	$$\nu_{Q_{i,j+1}} = [\nu_{Q_{i,j}};\, \nu_{Q_{i,j+1}}(Q_{i,j+1})=\nu(Q_{i,j+1})] $$
	for every $j$, $1\leq j <L$ (if $B_N$ is finite, consider $L = |B_N|+1$).
	\end{itemize}
	\end{enumerate}
\end{Prop}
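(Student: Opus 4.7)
The plan is to build $\boldsymbol{Q}$ by induction on the level $\delta$, using the tangent direction $\Psi(Q)=\Phi_{\nu_Q,\nu}$ of Lemma \ref{lemaPsiQ} to generate successive key polynomials. To begin, pick any $a\in K$, set $Q_0=x-a$, and observe that the truncation $\nu_{Q_0}$ is the depth-zero valuation $\omega_{a,\nu(Q_0)}=[\nu_0;Q_0,\nu(Q_0)]$; this becomes the minimum of $\boldsymbol{Q}$, placed inside $I_1$. At the inductive step, if the current truncation $\nu_Q$ equals $\nu$ everywhere, the construction stops; otherwise $\Psi(Q)$ is nonempty, and any $Q'\in \Psi(Q)$ of minimal degree is, by Lemma \ref{lemaPsiQ}, again a key polynomial with $\delta(Q')>\delta(Q)$, and the minimality of its degree forces $\nu_{Q'}=[\nu_Q;Q',\nu(Q')]$ to be an ordinary MLV augmentation.

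The blocks $B_i$ record finite runs of such minimal-degree extensions whose degrees strictly increase; each ordinary augmentation $\nu_{Q_{i,j+1}}=[\nu_{Q_{i,j}};Q_{i,j+1},\nu(Q_{i,j+1})]$ raises the stable degree. The process transitions to a continuous family $\mathcal{A}_i=\{\nu_Q\}_{Q\in A_i}$ precisely when, beyond a certain $Q_{i,n_i}$, no minimal-degree element of $\Psi(\,\cdot\,)$ of strictly larger degree is available and instead one finds an entire $\delta$-cofinal chain of same-stable-degree augmentations of $\nu_{Q_{i,n_i}}$. In that case, close $B_i$ at $Q_{i,n_i}$ and let $A_i$ collect the abstract key polynomials whose truncations realize these ordinary augmentations; by Lemma \ref{prop3itensQQlinhaPoliChaves}(3), the ordering by $\delta$ translates into the strict ordering of the augmentation parameters $\beta$, yielding a continuous family in the sense of Section \ref{Augmentations}. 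A MLV limit key polynomial of $\mathcal{A}_i$, which exists exactly when $\mathcal{A}_i$ is essential, becomes the first element $Q_{i+1,1}$ of $I_{i+1}$ via the limit augmentation $\nu_{Q_{i+1,1}}=[\mathcal{A}_i;Q_{i+1,1},\nu(Q_{i+1,1})]$. The three numbered alternatives of the statement correspond to whether the construction terminates inside some $B_N$, terminates with a final stabilizing family $A_N$ that reaches $\nu$ without producing a limit key polynomial, or continues indefinitely.

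Completeness follows from Lemma \ref{lempolichaveEpsilonmenor}: for fixed $f\in K[x]$, the values $\nu_Q(f)$ are non-decreasing in $\delta(Q)$ and bounded above by $\nu(f)$, and whenever $\nu_Q(f)<\nu(f)$ the set $\Psi(Q)$ contains a polynomial of degree at most $\deg(f)$. A bookkeeping argument on $\deg(f)$ then shows that $\nu_Q(f)=\nu(f)$ must be attained at some $Q\in \boldsymbol{Q}$, either within a $B_i$ or after crossing finitely many continuous-family steps.

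The main technical obstacle will be verifying that the continuous-family step is consistent with the MLV framework: that the $\delta$-ordered set $A_i$ is order-isomorphic to a totally ordered index set without maximum, that consecutive truncations $\nu_Q$ for $Q\in A_i$ satisfy the relations $\rho_j=[\rho_i;\chi_j,\beta_j]$ with the stable-degree condition $\deg(\chi_j)=\deg(Q_{i,n_i})$, and that when $\nu$ is not the stable limit of $\mathcal{A}_i$ a genuine MLV limit key polynomial actually exists and can be selected to lie in $\boldsymbol{Q}$. This rests on careful use of Lemma \ref{prop3itensQQlinhaPoliChaves}(3) to convert $\delta$-inequalities into $\nu$-inequalities and thereby into the augmentation data, together with the reconciliation of the abstract notion of key polynomial with the MLV notion used in the definition of continuous families.
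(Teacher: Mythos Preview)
The paper does not prove this proposition at all: it is quoted verbatim as \cite[Corollary~6.5]{josneimonomial} and used as a black box. So there is no ``paper's own proof'' to compare against; your proposal is a sketch of the construction carried out in that external reference.

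Your outline is essentially the correct one and matches the strategy of \cite{josneimonomial}: start from a depth-zero truncation $\nu_{x-a}$, iteratively replace $Q$ by an element of the tangent direction $\Psi(Q)$, record strictly-increasing-degree steps in the finite blocks $B_i$, and pass to a continuous family $A_i$ when the degree of the elements of $\Psi(Q)$ stabilises. Two points deserve tightening. First, the claim that choosing $Q'\in\Psi(Q)$ of minimal degree yields $\nu_{Q'}=[\nu_Q;Q',\nu(Q')]$ is exactly \cite[Theorem~6.1]{josneimonomial} (invoked later in the paper in the proof of Theorem~\ref{propFiniteSeqiffDefectlessUnibranched}); you should cite it rather than assert it follows from minimality alone. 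Second, your completeness paragraph is a bit loose: the ``bookkeeping argument on $\deg(f)$'' must handle the case where $\nu$ is a stable limit (item~(4), first bullet, or the infinite-$N$ case), in which no single $Q$ satisfies $\nu_Q=\nu$, and one must argue that for each fixed $f$ the values $\nu_Q(f)$ eventually stabilise at $\nu(f)$ along the well-ordered set $\boldsymbol{Q}$. This is where the hypothesis that each $A_i$ is an \emph{essential} continuous family (so polynomials of degree below the limit-key-polynomial degree are $\mathcal{A}_i$-stable) is actually used.
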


\begin{Obs}
	In the language of \cite{Nart}, from the family $\mathcal{F}$ we can construct a Mac Lane-Vaquié chain ending in $\nu$.
\end{Obs}

\subsection{Finite complete sequences of key polynomials}

	Let $(L\mid K, v)$ be a simple algebraic extension and take $\eta$ a generator for $L\mid K$.   Let $\nu$ be the valuation on $K[x]$  defined by $\eta$ and $v$: for $f\in K[x]$, we define
	\begin{equation}\label{eqInducedVal}
		\nu(f):=v(f(\eta)). 
	\end{equation}

	 Take $g$ the minimal polynomial of $\eta$ over $K$ with degree $n$. Since $\nu(g) = \infty$, we have also $\delta(g)=\infty$ and $\supp(\nu)=(g)$. Hence $g$ is a key polynomial for $\nu$ (if $\deg(f)<\deg(g)$, then $f\not\in \supp(\nu)$, that is, $\delta(f)<\infty$).  Moreover, there is no key polynomial of degree greater than $n$ (since a key polynomial of degree greater than $n$ must have $\delta$ greater than $\delta(g)=\infty$, what does not happen).
	 
	 \vspace{0.3cm}

	 We denote by $\Psi_m$ the set of all key polynomials for  $\nu$ of degree $m\geq 1$. We will say that $\Psi_m$ \textit{admits a maximum} (with respect to $\nu$) if $\nu(\Psi_m):=\{\nu(Q)\mid Q\in \Psi_m\}$ admits a maximal element.
	 
	 \vspace{0.3cm}

	  The following proposition is the key that allows us to prove  our main result. It says that a finite complete sequence of key polynomials exists if and only if there are no \textit{limit key polynomials} for $\nu$ (in the sense of \cite{josneiKeyPolyPropriedades}).

\begin{Prop}\label{lemSeqFinitaComplEPsimCommax}
	Let $(L\mid K, v)$ be a simple algebraic extension
	and
 let $\nu$ be the valuation on $K[x]$ defined in (\ref{eqInducedVal}).
   Then $\nu$ admits a finite complete sequence of key polynomials if and only if for every $m\geq 1$ either $\Psi_m$ is empty or it admits a maximum. 
\end{Prop}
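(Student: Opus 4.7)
The plan is to prove the two implications separately, both resting on Lemmas \ref{prop3itensQQlinhaPoliChaves} and \ref{lempolichaveEpsilonmenor} together with the universal inequality $\nu_q(f) \leq \nu(f)$ that follows from the definition of $\nu_q$.

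For the implication ``each $\Psi_m$ is empty or has a maximum $\Rightarrow$ $\nu$ has a finite complete sequence'', I would construct such a sequence inductively. Since $x - a \in \Psi_1$ for every $a \in K$, $\Psi_1$ is non-empty and by hypothesis admits a maximum $Q_1$. Given $Q_1,\dots,Q_i$ with strictly increasing degrees $m_1 < \dots < m_i$ and each $Q_j$ a maximum of $\Psi_{m_j}$, let $m_{i+1}$ be the smallest integer $>m_i$ with $\Psi_{m_{i+1}} \neq \emptyset$ (if any) and pick $Q_{i+1}$ a maximum of $\Psi_{m_{i+1}}$. The minimal polynomial $g$ of $\eta$ lies in $\Psi_n$ with $\nu(g) = \infty$, so $g$ is the unique maximum of $\Psi_n$ and the process terminates at $Q_r = g$ in at most $n$ steps; Lemma \ref{prop3itensQQlinhaPoliChaves}(1) yields $\delta(Q_j) < \delta(Q_{j+1})$, making $\{Q_1,\dots,Q_r\}$ a legitimate sequence of key polynomials. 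Completeness is the substantive check: given $f \in K[x]\setminus\{0\}$, let $i$ be the largest index with $\deg(Q_i) \leq \deg(f)$ and suppose $\nu_{Q_i}(f) < \nu(f)$. Then the tangent direction $\Psi(Q_i) = \Phi_{\nu_{Q_i},\nu}$ is non-empty, and by Lemma \ref{lemaPsiQ} every $Q' \in \Psi(Q_i)$ is a key polynomial with $\delta(Q') > \delta(Q_i)$. Lemma \ref{prop3itensQQlinhaPoliChaves}(1) excludes $\deg(Q') < m_i$, and the $\delta$-maximality of $Q_i$ in $\Psi_{m_i}$ combined with Lemma \ref{prop3itensQQlinhaPoliChaves}(3) excludes $\deg(Q') = m_i$; hence $\deg(Q') > m_i$ and, by the minimality defining $m_{i+1}$, $\deg(Q') \geq m_{i+1}$. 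Since $\deg(f)$ is at least the common degree of elements of $\Psi(Q_i)$, this forces $\deg(f) \geq m_{i+1} = \deg(Q_{i+1})$, contradicting the choice of $i$.

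For the converse, I argue by contrapositive. Assume some $\Psi_m$ is non-empty with no maximum and let $\boldsymbol{Q}$ be any complete sequence for $\nu$. For every $Q \in \Psi_m$, completeness supplies a witness $q \in \boldsymbol{Q}$ with $\deg(q) \leq m$ and $\nu_q(Q) = \nu(Q)$. The case $\deg(q) < m$ is impossible: Lemma \ref{prop3itensQQlinhaPoliChaves}(1) would give $\delta(q) < \delta(Q)$ and then Lemma \ref{prop3itensQQlinhaPoliChaves}(2) would yield $\nu_q(Q) < \nu(Q)$, contradicting the equality. So $\deg(q) = m$ and $q \in \Psi_m$; the contrapositive of Lemma \ref{prop3itensQQlinhaPoliChaves}(3), combined with $\nu_q(Q) \leq \nu(Q)$, then forces $\nu(q) \geq \nu(Q)$. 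If $\boldsymbol{Q}$ were finite, the collection of all such witnesses $q$ would be finite and its element $q^*$ of maximal $\nu$-value would lie in $\Psi_m$ and satisfy $\nu(q^*) \geq \nu(Q)$ for every $Q \in \Psi_m$, providing a maximum of $\Psi_m$ and contradicting the hypothesis.

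The main obstacle is the completeness verification in the constructive direction, specifically ruling out that $\Psi(Q_i)$ contains key polynomials of degree strictly between $m_i$ and $m_{i+1}$; this is exactly where both the $\delta$-maximality of $Q_i$ inside $\Psi_{m_i}$ (via Lemma \ref{prop3itensQQlinhaPoliChaves}(3)) and the minimality defining $m_{i+1}$ must be combined.
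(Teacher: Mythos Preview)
Your proof is correct and follows essentially the same approach as the paper: pick a $\nu$-maximal key polynomial in each nonempty $\Psi_m$, verify completeness by analyzing $\Psi(Q_i)$ via Lemmas \ref{prop3itensQQlinhaPoliChaves} and \ref{lemaPsiQ}, and for the converse show that any completeness witness $q\in\boldsymbol{Q}$ for $Q\in\Psi_m$ must itself lie in $\Psi_m$ with $\nu(q)\geq\nu(Q)$, so a finite $\boldsymbol{Q}$ yields a maximum. The only cosmetic point is the edge case $i=r$ in your completeness check (where ``$m_{i+1}$'' is undefined); the paper handles this separately by observing $\nu_g=\nu$, while in your framework one simply notes that $\deg(Q')>n$ is impossible since there are no key polynomials of degree greater than $n$.
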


\begin{proof}Suppose that for every $m$ either $\Psi_m$ is empty or it admits a maximum. For each $m$ such that $\Psi_m\neq \emptyset$, choose one $Q_m$ of maximal value. Let $\boldsymbol{Q}$ be the set of these maximal elements  $Q_m$. 
	We will show that $\boldsymbol{Q}$ is a finite complete sequence of key polynomials. 
	
	\vspace{0.1cm}
	
	Take $f\in K[x]$ any monic polynomial. If $\deg(f)\geq n$, then via the $g$-expansion of $f$ we have
	$$ \nu(f) = \nu(f_0+f_1g+\ldots+f_sg^s)=\nu(f_0)=\nu_g(f)$$
	because $\nu(g)=\infty$. 
	
	\vspace{0.1cm}

		Suppose $\deg(f)<n$. Let $m$ be the biggest integer such that $\Psi_{m}\neq \emptyset$ and $m\leq \deg(f)$.  Take $Q_{m}\in \boldsymbol{Q}$ the  chosen element of maximal value of $ \Psi_{m}$. Let us prove that $\nu_{Q_{m}}(f)=\nu(f)$. Indeed, if $\nu_{Q_{m}}(f)<\nu(f)$, then take any $h\in \Psi(Q_{m})$, which is a key polynomial (Lemma \ref{lemaPsiQ}). We must have $m\leq \deg(h)\leq \deg(f)$. If $m<\deg(h)$, then we contradict the choice of $m$. If $m=\deg(h)$, then by Lemma \ref{lemaPsiQ} and Lemma \ref{prop3itensQQlinhaPoliChaves} (3) we conclude that $\nu(Q_{m})<\nu(h)$, a contradiction since $Q_m$ has maximal value among the elements of $\Psi_m$.  Hence $\nu_{Q_{m}}(f)=\nu(f)$.	
	Therefore, $\boldsymbol{Q}$  is complete. It is finite since $\Psi_m = \emptyset$ for every $i>n$.
	
	\vspace{0.1cm}
	
	 For the converse, let $\boldsymbol{Q} = \{Q_1, \ldots, Q_r\}$ be a finite complete sequence of key polynomials. Take $m$ and suppose $\Psi_m\neq \emptyset$. Take $Q\in \Psi_m$. Hence, there exists $Q_i\in \boldsymbol{Q}$ such that $\deg(Q_i)\leq m$ and $\nu_{Q_i}(Q)=\nu(Q)$. Let us see that $Q_i\in \Psi_m$. Indeed, if $\deg(Q_i)< m = \deg(Q)$, then $\delta(Q_i)<\delta(Q)$ and this implies that $\nu_{Q_i}(Q)<\nu(Q)$ (Lemma \ref{prop3itensQQlinhaPoliChaves}), a contradiction. 
	
	\vspace{0.1cm}
	
	Consider $Q_{\tilde{i}}\in \boldsymbol{Q}$ such that $$\delta(Q_{\tilde{i}})=\max_{1\leq i \leq r}\{ \delta(Q_i)\mid Q_i\in \Psi_m  \}.$$
	 We will prove that $Q_{\tilde{i}}$ is an element with maximal value in $\Psi_m$. Suppose there is $Q\in\Psi_m$ such that $\nu(Q_{\tilde{i}})<\nu(Q)$. 
	  However, we know that there exists $Q_j\in \boldsymbol{Q}$ such that $\deg(Q_j)\leq m$ and $\nu_{Q_j}(Q)=\nu(Q)$. By the same reasoning above,  we have $Q_j\in \Psi_m$. 
	 By Lemma \ref{prop3itensQQlinhaPoliChaves} (3), $\nu(Q_{\tilde{i}})<\nu(Q)$ implies $\delta(Q_{\tilde{i}})<\delta(Q_j)$, contradicting the choice of $Q_{\tilde{i}}$. Therefore, $\Psi_m$ has a maximum. 
\end{proof}

We will call the valued field extension $(L\mid K, v)$ \textbf{unibranched} if  $v$  admits a unique extension from $K$ to $L$.  In the sequence, we prove our main result.

\begin{Teo}\label{propFiniteSeqiffDefectlessUnibranched}
	Let $(L\mid K, v)$ be a simple algebraic extension
and
 let $\nu$ be the valuation on $K[x]$ defined in (\ref{eqInducedVal}).
 Then $\nu$ admits a finite complete sequence of key polynomials if and only if $d(L\mid K, v)=1$ and $(L\mid K, v)$ is unibranched.
\end{Teo}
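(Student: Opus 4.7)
The plan is to exploit the correspondence between finite complete sequences of key polynomials and MLV chains of augmentations consisting solely of ordinary steps, so that the defect identity becomes a transparent consequence of the multiplicativity of relative ramification and inertia along such a chain.

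For the forward direction, I assume $\nu$ admits a finite complete sequence $\boldsymbol{Q}$. By Proposition \ref{lemSeqFinitaComplEPsimCommax}, for every $m$ the set $\Psi_m$ is either empty or admits a maximum. I let $Q_1, \ldots, Q_r = g$ be the maxima of the non-empty $\Psi_m$, arranged by increasing degree (noting that $g$, the minimal polynomial of $\eta$, lies in $\Psi_n$ since $\nu(g)=\infty$). I then verify that the truncations $\mu_i := \nu_{Q_i}$ assemble into a chain
\[
\mu_0 \overset{Q_1}{\longrightarrow} \mu_1 \overset{Q_2}{\longrightarrow} \cdots \overset{Q_r}{\longrightarrow} \mu_r = \nu
\]
of ordinary augmentations $\mu_{i+1} = [\mu_i; Q_{i+1}, \nu(Q_{i+1})]$. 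The key point is that $\delta(Q_i)<\delta(Q_{i+1})$ (Lemma \ref{prop3itensQQlinhaPoliChaves}), so $Q_{i+1}$ lies in the tangent direction $\Phi_{\mu_i,\nu}$ and is a MLV key polynomial for $\mu_i$; completeness of $\boldsymbol{Q}$ guarantees the chain terminates at $\nu$. The multiplicativity of the relative ramification indices and inertia degrees along the chain (Section \ref{SubSecMCVChain}) then yields $[L:K] = \deg(\nu) = e(w/v)\,f(w/v)$, where $w$ is the valuation on $L$ induced by $\eta$ and $v$. The fundamental inequality forces $w$ to be the unique extension of $v|_K$ to $L$ (unibranched) and $d(L\mid K,v) = n/(ef) = 1$ (defectless).

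For the reverse direction, I assume $(L\mid K,v)$ is unibranched with $d(L\mid K,v) = 1$. Since $\supp(\nu) = (g) \neq (0)$, Theorem \ref{teoNartMacLaneVaquieChains}(1) produces a finite MLV chain $\mu_0 \to \cdots \to \mu_r = \nu$. By Corollary \ref{CorDefeitoProdGrausNartJosnei} (applicable by the Remark following it, since the extension is unique),
\[
1 = d(w/v) = \prod_{t \in J} \frac{\deg(\mu_{t+1})}{\deg(\mu_t)},
\]
with $J$ indexing the limit steps. Each factor is strictly greater than $1$, forcing $J = \emptyset$: every augmentation in the chain is ordinary. To exhibit a finite complete sequence of (abstract) key polynomials, I invoke Proposition \ref{lemSeqFinitaComplEPsimCommax}: if some $\Psi_m$ were non-empty without a maximum, an unbounded $\delta$-cofinal family of key polynomials of degree $m$ would give rise to a non-trivial essential continuous family of augmentations in the MLV chain ending at $\nu$ (cf.\ Proposition \ref{propAdmissibleFamilyJosneiTrunc}), hence a limit step, contradicting $J = \emptyset$.

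The principal technical obstacle lies in the forward direction: making rigorous that the truncations $\mu_i = \nu_{Q_i}$ genuinely form an MLV chain, in particular verifying the equality $\mu_{i+1} = [\mu_i; Q_{i+1}, \nu(Q_{i+1})]$ (which requires the completeness of $\{Q_1,\ldots,Q_i\}$ up to degree $\deg(Q_{i+1})$, combined with Lemma \ref{lempolichaveEpsilonmenor}) and the MLV condition $\deg(\mu_i) < \deg(\Phi_{\mu_i,\mu_{i+1}})$. A secondary difficulty is the closing step of the reverse direction, where one must translate ``no limit augmentation in the MLV chain'' into ``every $\Psi_m$ is empty or has a maximum'' in the abstract-key-polynomial framework of Section \ref{CompleteSets}.
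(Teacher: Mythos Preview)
Your forward direction is exactly the paper's argument: select maxima of the non-empty $\Psi_m$, check that the truncations form an MLV chain of ordinary augmentations (the paper cites \cite[Theorem 6.1]{josneimonomial} for the identity $\mu_{t+1}=[\mu_t;Q_{n_{t+1}},\nu(Q_{n_{t+1}})]$), and then telescope the relative $e_t,f_t$ along the chain to obtain $e(L\mid K,v)\,f(L\mid K,v)=\deg(g)=[L:K]$, whence unibranched and defectless by the fundamental inequality.

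For the reverse direction, the paper takes a more direct path than you do. Instead of first invoking Theorem \ref{teoNartMacLaneVaquieChains} to produce \emph{some} MLV chain and then arguing by contradiction that each $\Psi_m$ must admit a maximum, the paper goes straight to Proposition \ref{propAdmissibleFamilyJosneiTrunc}, which already furnishes a concrete complete sequence $\boldsymbol{Q}=\bigcup_{i=1}^N I_i$ together with its MLV-chain structure. Applying Corollary \ref{CorDefeitoProdGrausNartJosnei} to \emph{that} chain forces $N=1$ and $A_1=\emptyset$, so $\boldsymbol{Q}=B_1=\{Q_{1,1},\ldots,Q_{1,n_1}\}$ is finite. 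This bypasses your ``secondary difficulty'' entirely: there is no translation step needed between the MLV framework and the abstract-key-polynomial framework, because Proposition \ref{propAdmissibleFamilyJosneiTrunc} lives in both simultaneously.

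Your route is not wrong, but as written there is a small gap: you establish $J=\emptyset$ for the chain coming from Theorem \ref{teoNartMacLaneVaquieChains}, and then derive a limit step in the (a priori different) chain coming from Proposition \ref{propAdmissibleFamilyJosneiTrunc}. To close it, either observe that Corollary \ref{CorDefeitoProdGrausNartJosnei} applies to \emph{any} proper chain (so a limit step in the second chain would already give $d>1$), or---more economically---start from Proposition \ref{propAdmissibleFamilyJosneiTrunc} as the paper does and drop the appeal to Theorem \ref{teoNartMacLaneVaquieChains} altogether.
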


\begin{proof}
		 Suppose $d(L\mid K, v)=1$ and $(L\mid K, v)$ unibranched.  Consider the Mac Lane-Vaquié chain ending in $\nu$ given by $ \mathcal F = \{(\nu_Q, Q, \nu(Q))  \}_ {Q\in \boldsymbol{Q}}$  from Proposition \ref{propAdmissibleFamilyJosneiTrunc}.  Corollary \ref{CorDefeitoProdGrausNartJosnei}
		 shows us  that we cannot have limit augmentations in $\mathcal{F}$ (because in a Mac Lane-Vaquié chain, a limit augmentation $\mu_t\to\mu_{t+1}$ satisfies $d(\mu_t\to\mu_{t+1})=\deg(\mu_{t+1})/\deg(\mu_{t})>1$). That is,  $N=1$ in Proposition \ref{propAdmissibleFamilyJosneiTrunc}.
	 This means that the complete set $\boldsymbol{Q}$ from Proposition \ref{propAdmissibleFamilyJosneiTrunc}  is such that $\boldsymbol{Q} = I_1=B_1=\{Q_{1,1}, \ldots, Q_{1,n_1}\}$ for some $n_1\in \N$. Hence, $\boldsymbol{Q}$ is a finite complete sequence of key polynomials.
	 
	 \vspace{0.1cm}
	
	For the converse, consider $g$ the minimal polynomial of $\eta$ over $K$. Suppose $\nu$ admits a finite complete sequence of key polynomials, that is, for all $m$ either $\Psi_m=\emptyset$ or it admits a maximum. Consider $n_0<n_1<\ldots <n_r=\deg(g)$ the indexes such that $\Psi_{n_t}\neq \emptyset$ and $\Psi_m=\emptyset$ for all $n_t<m<n_{t+1}$, where $t\in \{0, \ldots, r-1\}$. Denote by $Q_{n_t}$ a chosen element of $\Psi_{n_t}$,  where $Q_{n_{r}}=g$. Then we have the following chain of augmentations
	$$\mu_0 \rightarrow \mu_1 \rightarrow \cdots \rightarrow \mu_r =\nu$$
	where $\mu_t = \nu_{Q_{n_t}}$.
	This is a finite depth Mac Lane-Vaquié chain and every augmentation is ordinary. Indeed, by Lemma \ref{prop3itensQQlinhaPoliChaves} (2) we have $\mu_{t+1}(Q_{n_{t+1}})<\nu(Q_{n_{t+1}})$ and, since $\Psi_m=\emptyset$ for $n_{t}<m<n_{t+1}$, we conclude that   $Q_{n_{t+1}}\in \Psi(Q_{n_t})$. By Theorem 6.1 of \cite{josneimonomial}, $\mu_{t+1} = [\mu_t; Q_{n_{t+1}}, \nu(Q_{n_{t+1}}) ]$. Since $\deg(\Phi_{\mu_{t+1},\nu})=\deg(Q_{n_{t+1}})=m_t$ (because all augmentations are ordinary) and $\deg(\mu_t)=\deg(Q_{n_t})$, it follows that we have a Mac Lane-Vaquié chain.

	Since 
	 $m_{r}=\deg(Q_{n_{r}})=\deg(g)$, the properties listed at the end of Subsection \ref{SubSecMCVChain} lead us to conclude that
	 {\allowdisplaybreaks
	\begin{align*}
		f(L\mid K,v)&=f(\nu/v)\\& =\prod_{t=0}^{r-1}f_t \\
		& = \prod_{t=0}^{r-1}\frac{\deg(\Phi_{\mu_t,\mu_{t+1}})}{e_tm_t}\\
		& = \frac{m_{r}}{e_{r-1}m_{r-1}} \cdots \frac{m_{1}}{e_{0}m_{0}}\\
		& = \frac{m_{r}}{e_{r-1}\cdots e_0}\\
		& =\frac{\deg(g)}{e(L\mid K, v)}.
	\end{align*}}
By the fundamental inequality, we must have $d(L\mid K, v)=1$ and $(L\mid K, v)$ unibranched.
	\end{proof}

\begin{Obs}\label{ObsNaoDependeGerador}
	In (\ref{eqInducedVal}), we see that the definition of $\nu$ depends on the chosen generator $\eta$ for the simple extension. However, since $d(L\mid K,v)$ and the property of being unibranched  do not depend on $\eta$, we conclude that  the existence of a finite complete sequence of key polynomials also does not depend on the choice of the generator of the extension. 
\end{Obs}

The next corollary will give us a characterization of henselian defectless fields in terms of finite complete sequences of key polynomials. We will need the following lemma.

\begin{Lema}\label{lemHensiffEvrySimpleExtUnibranched}
	We have $(K,v)$ henselian if and only if for every simple field extension $L\mid K$ we have $(L\mid K, v)$ unibranched.
\end{Lema}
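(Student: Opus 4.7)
The plan is to reduce both directions to pointwise comparisons of extensions of $v$, using that every element of an algebraic extension lies in some simple subextension.

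For the forward implication, suppose $(K,v)$ is henselian. By definition, every algebraic extension of $K$ admits a unique extension of $v$. Since a simple algebraic extension $L=K(\eta)$ is in particular an algebraic extension, there is a unique extension of $v$ from $K$ to $L$; that is, $(L\mid K, v)$ is unibranched. This direction is immediate from the definitions stated in Section~\ref{ValDef}, and requires no further argument.

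For the converse, assume that every simple algebraic extension of $K$ is unibranched, and let $L\mid K$ be an arbitrary algebraic extension. Take two extensions $w_1, w_2$ of $v$ to $L$ and fix $\alpha \in L$. Then $K(\alpha)\mid K$ is a simple algebraic extension, and the restrictions $w_1|_{K(\alpha)}$ and $w_2|_{K(\alpha)}$ are both extensions of $v$ to $K(\alpha)$. By the unibranched hypothesis applied to $K(\alpha)\mid K$, these restrictions coincide, so in particular $w_1(\alpha)=w_2(\alpha)$. Since $\alpha \in L$ was arbitrary, $w_1=w_2$, proving that $v$ admits a unique extension to $L$. Hence $(K,v)$ is henselian.

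I do not expect any genuine obstacle here: the entire content of the lemma is that henselianity, which is a priori a statement about all algebraic extensions, is already captured by the simple ones, and this follows immediately from the fact that two valuations on an algebraic extension agree if and only if they agree on every singly generated subfield. The only subtlety to watch for is ensuring that the implicit meaning of ``simple field extension'' in the statement is ``simple algebraic extension'', which is the convention used throughout the paper (cf.\ Remark~\ref{ObsNaoDependeGerador} and the hypotheses of Theorem~\ref{propFiniteSeqiffDefectlessUnibranched}); with that understood, the argument above is complete.
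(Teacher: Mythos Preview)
Your proof is correct and follows essentially the same approach as the paper: the forward direction is immediate from the definition, and for the converse both you and the paper compare two extensions of $v$ pointwise by restricting to the simple subextension $K(\alpha)$ generated by each element. The only cosmetic difference is that the paper carries out the argument directly on $\overline{K}$ rather than on an arbitrary algebraic extension $L$, but the content is identical.
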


\begin{proof}
	One direction follows from the definition of a henselian valued field. For the other implication, take $\overline{\mu}_1$ and $\overline{\mu}_2$ extensions of $v$ to $\overline{K}$. For each  $a\in \overline{K}$, consider the restrictions
	$(K(a), \mu_1)$ and $(K(a), \mu_2)$ where $\mu_i=\overline{\mu}_i|_{K(a)}$. By hypothesis, $\mu_1=\mu_2$ in	$K(a)$, hence $\overline{\mu}_1(a)=\overline{\mu}_2(a)$ for every $a\in \overline{K}$. It follows that the extension of $v$ from $K$ to $\overline{K}$ is unique, that is,  $(K,v)$ is henselian. 
\end{proof}

For a  finite extension $(L\mid K, v)$, with $L=K(a_1,\ldots, a_m)$, we write $K_j = K(a_1, \ldots, a_j)$, $1\leq j\leq m$ and consider the simple extension $(K_j\mid K_{j-1},v)$ (with $K_0=K$).
We will say that $(K_j\mid K_{j-1},v)$ \textit{admits a finite complete sequence of key polynomials} if the valuation $\nu_j$ induced by $(K_j\mid K_{j-1},v)$ on the polynomial ring $K_{j-1}[x]$ admits a finite complete sequence of key polynomials.  By Remark \ref{ObsNaoDependeGerador},  this property does not depend on the choice of the generators $a_1,\ldots, a_m$. 

\vspace{0.3cm}

Using the above notation, let us consider the following property.

\begin{description}
	
	\item[(\textbf{FCS*})] every finite extension $(L\mid K, v)$ is such that each simple subextension \linebreak $(K_j\mid K_{j-1},v)$ admits a finite complete sequence of key polynomials.
\end{description}

\begin{Cor}\label{teoHensDefectlessiffKjfiniteCompleteseq}
	We have $(K,v)$ henselian defectless if and only if $(K,v)$ satisfies (FCS*).
\end{Cor}

\begin{proof}It follows from Theorem \ref{propFiniteSeqiffDefectlessUnibranched}, Lemma \ref{lemHensiffEvrySimpleExtUnibranched} and the fact that for a finite extension $(L\mid K, v)$ we have $$d(L\mid K,v) = \prod_{j=1}^m d(K_j\mid K_{j-1},v).$$
	\end{proof}

\section{Simply defectless  and algebraically maximal valued fields}\label{SimplyDefectless}

\subsection{Simply defectless valued fields}

We will call a valued field $(K,v)$ \textbf{simply defectless} if all simple algebraic extensions of $K$ are defectless.

\vspace{0.3cm}

Every defectless field is simply defectless,
but the converse is not true in general, as it is shown in the example presented in \cite[Section 5]{AghighKhandujaExSimplyDefectlessNotDefectless}.
 Next, we will see that if $vK$ is $p$-divisible and $Kv$ is perfect, 
  then we have the converse. 

\begin{Prop}\label{teoSimplyDefectlessAndDefecteless}
	Suppose $(K,v)$ henselian. If $(K,v)$ is simply defectless, $vK$ is $p$-divisible and $Kv$ is perfect,	
 then $(K,v)$ is a defectless field. 
\end{Prop}

\begin{proof}Suppose $(L\mid K, v)$ is a finite defect extension.  We will show that there exists a simple algebraic extension of $K$ which is not defectless. Take $N$ the normal closure of $L\mid K$. Then $(N\mid K, v)$ is also a finite defect extension. Take $G={\rm Aut}_K(N)$ and consider the fixed field $K^G$, which is a purely inseparable extension of $K$.
	 \vspace{0.1cm}

	We have $(K^G\mid K,v)$ immediate. Indeed, $Kv\subset K^Gv$ is also purely inseparable and $vK^G/vK$ is a $p$-group. Since $Kv$ is perfect and $vK$ is $p$-divisible, it follows that $Kv=K^Gv$ and  $vK=vK^G$. 
	
	 \vspace{0.1cm}
	
	 If $K=K^G$, since $K=K^G\subset N$ is separable and finite, then $(N\mid K,v)$ is a simple defect extension and we are done. 
	If $K^G\neq K$, take $a\in K^G\setminus K$ and consider the simple extension $K\subset K(a)\subset K^G$. This is also an immediate extension, hence $d(K(a)\mid K, v)=[K(a):K]>1$. 
	\end{proof}

 The next corollary follows  from   Theorem \ref{propFiniteSeqiffDefectlessUnibranched} and Lemma \ref{lemHensiffEvrySimpleExtUnibranched}.

\begin{Cor}\label{corFCS1simplydefectless} We have $(K,v)$
	simply defectless and henselian if and only if every simple field extension of $K$ admits a finite complete sequence of key polynomials.
\end{Cor}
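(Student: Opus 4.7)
The plan is to read the corollary as a universally-quantified restatement of Theorem \ref{propFiniteSeqiffDefectlessUnibranched}, where the quantifier is taken over all simple algebraic extensions $(L\mid K, v)$, and to use Lemma \ref{lemHensiffEvrySimpleExtUnibranched} to translate "unibranched for every such extension" into the single condition that $(K,v)$ be henselian.

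For the forward direction, I would assume $(K,v)$ is simply defectless and henselian, and fix an arbitrary simple algebraic extension $(L\mid K, v)$ with induced valuation $\nu$ on $K[x]$. Being simply defectless gives $d(L\mid K, v) = 1$; being henselian forces, via Lemma \ref{lemHensiffEvrySimpleExtUnibranched}, the extension to be unibranched. Theorem \ref{propFiniteSeqiffDefectlessUnibranched} then produces a finite complete sequence of key polynomials for $\nu$. Since $(L\mid K, v)$ was arbitrary, and by Remark \ref{ObsNaoDependeGerador} the property does not depend on the choice of generator, this establishes one implication.

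For the converse, I would assume every simple algebraic extension of $K$ admits a finite complete sequence of key polynomials. Fixing such an extension $(L\mid K, v)$, the same Theorem \ref{propFiniteSeqiffDefectlessUnibranched} applied in the other direction simultaneously yields $d(L\mid K, v) = 1$ and unibranchedness. Running this over all simple algebraic extensions gives, on the one hand, that $(K,v)$ is simply defectless by definition and, on the other, by Lemma \ref{lemHensiffEvrySimpleExtUnibranched}, that $(K,v)$ is henselian.

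There is no real obstacle here beyond bookkeeping: the content is entirely packaged in Theorem \ref{propFiniteSeqiffDefectlessUnibranched} and Lemma \ref{lemHensiffEvrySimpleExtUnibranched}. The only point requiring mild care is that one must quantify over \emph{all} simple algebraic extensions so that both halves of the biconditional in the theorem are converted simultaneously; in particular, one should not try to prove "henselian" and "simply defectless" in isolation from a single extension, since Theorem \ref{propFiniteSeqiffDefectlessUnibranched} packages both conclusions together.
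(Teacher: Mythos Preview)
Your proposal is correct and follows the same approach as the paper, which simply states that the corollary follows directly from Theorem~\ref{propFiniteSeqiffDefectlessUnibranched}. You have made explicit the role of Lemma~\ref{lemHensiffEvrySimpleExtUnibranched} in converting ``every simple extension is unibranched'' into ``henselian,'' which the paper leaves implicit in its one-line justification.
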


We will also name the property appearing in
Corollary \ref{corFCS1simplydefectless}.

\begin{description}
	\item[(\textbf{FCS})] every simple extension $(L\mid K,v)$ admits a finite complete sequence of key polynomials.
	
\end{description}

\vspace{0.3cm}

We always have that (FCS*) implies (FCS). The next corollary states a converse for this implication.

\begin{Cor}\label{lemaFCS1andFCS2}
	Suppose that $vK$ is $p$-divisible and $Kv$ is perfect.
	 Then (FCS) and (FCS*) are  equivalent.
\end{Cor}

\begin{proof} We only need to prove the converse. It follows from  Corollary \ref{teoHensDefectlessiffKjfiniteCompleteseq} and Proposition \ref{teoSimplyDefectlessAndDefecteless}, since (FCS) is equivalent to $(K,v)$ being henselian and simply defectless by Corollary \ref{corFCS1simplydefectless}.
\end{proof}

\subsection{Algebraically maximal fields} A valued field extension $( L\mid  K, v)$ is said to be \textbf{immediate} if $v  L = v  K$ and $ L v =  K v$. A valued field $(K,v)$ is called \linebreak \textbf{algebraically maximal} if it does not admit proper immediate algebraic extensions. Every henselian defectless field is algebraically maximal, but the converse does not hold in general \cite[p.13]{kuhlmannTameFields}.

\vspace{0.3cm}

The property of being algebraically maximal is related to (FCS), as we see in the following proposition.

\begin{Prop}\label{propFCS1impliesAlgMax}
	If $(K,v)$ satisfies (FCS) (or equivalently if $(K,v)$ is henselian and simply defectless), then it is algebraically maximal. 
\end{Prop}

\begin{proof}
	By the contrapositive, suppose $(L\mid K,v)$ is a proper immediate algebraic extension. Take any $\eta\in L\setminus K$ and consider the simple algebraic extension \linebreak $(K(\eta)\mid K, v)$. It is also a proper immediate extension. Since $(K,v)$ is henselian, it is a defect extension. Hence, $(K,v)$ is not simply defectless.

\end{proof}

\begin{Obs}
	Let $\overline{v}$ be a fixed extension of $v$ from $K$ to a fixed algebraic closure $\overline{K}$. We can also prove Proposition \ref{propFCS1impliesAlgMax} using  \cite[Theorem 1.1]{AlgebMaxSimplyDefectAnujSudesh}, which says that	a henselian valued field $(K,v)$ is algebraically maximal if and only if  the set \linebreak $\overline{v}(\eta-K):=\{\overline{v}(\eta-a)\mid a\in K\}$ has a maximum for every $\eta\in \overline{K}\setminus K$. Indeed, 	by Proposition \ref{lemSeqFinitaComplEPsimCommax} and Corollary \ref{corFCS1simplydefectless}, (FCS) implies that $(K,v)$ is henselian and every simple extension $L=K(\eta)$ is such that the set $\overline{v}(\eta -K) = \nu(x-K)=\nu(\Psi_1)$ admits a maximum.

\end{Obs}

\section{Tame fields}\label{TameFields}

\subsection{Tame fields}

A unibranched extension $(L\mid K, v)$ is called \textbf{tame} if every finite subextension $E\mid K$ of $L\mid K$ satisfies the following conditions.

\begin{description}
	\item[(\textbf{TE1})] $(v E:v K)$ is not divisible by $\Char Kv$.
	
	\item[(\textbf{TE2})] $E v\mid Kv$ is a separable extension.
	
	\item[(\textbf{TE3})] $(E\mid K, v )$ is defectless.
\end{description}

\begin{Def}A valued field $(K,v)$ is called a \textbf{tame field} if  it is henselian and its algebraic closure with the unique extension of the valuation is a tame extension. 
	
\end{Def}

All henselian valued fields with $\Char Kv =0$ are tame \cite[p. 14]{kuhlmannTameFields}. Then we will focus on valued fields with positive residue characteristic.  

\vspace{0.3cm}

The following lemma 
can be deduced from the proof of \cite[Theorem 3.2]{kuhlmannTameFields} and further simple arguments.
%
\begin{Lema}\label{lemTE1pDivTE2perfect}
	Let $(K, v)$ be a henselian valued field with $\Char Kv = p>0$. Take $\overline{K}$ an algebraic closure of $K$.   We have the following.
	
	\begin{enumerate}
		\item $(\overline{K}\mid K, v)$ satisfies (TE1)
		if and only if $v K$ is $p$-divisible.

		\item $(\overline{K}\mid K, v)$ satisfies (TE2) if and only if $Kv$ is perfect.
	\end{enumerate}
	
\end{Lema}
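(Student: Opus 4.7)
Since $K$ is henselian, the valuation $v$ extends uniquely to $\overline{K}$, which will let me speak unambiguously of $v(b)$ and $bv$ for every $b\in\overline{K}$ and make sense of (TE1), (TE2) on each finite subextension of $\overline{K}\mid K$. The plan is to handle (1) and (2) separately, testing each direction against extensions of the form $E=K(b)$ with $b^p=a\in K$.

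For (1), I will begin with the backward direction. Assuming $vK$ is $p$-divisible and $E\mid K$ finite, the group $vE/vK$ is finite with $(vE:vK)\leq[E:K]$; if $p$ divided $(vE:vK)$, Cauchy's theorem would provide $\gamma\in vE$ with $p\gamma\in vK$ and $\gamma\notin vK$. Choosing $\delta\in vK$ with $p\delta=p\gamma$ (by $p$-divisibility) and using torsion-freeness of the ordered group $vE$ will force $\gamma=\delta\in vK$, a contradiction. For the forward direction, take $\gamma=v(a)\in vK$ and let $b\in\overline{K}$ be a root of $X^p-a$. By the classical irreducibility criterion for binomials of prime degree, $[K(b):K]\in\{1,p\}$, so $(vK(b):vK)$ divides $p$; (TE1) rules out equality to $p$, yielding $\gamma/p=v(b)\in vK$.

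For (2), the backward direction will follow immediately because every algebraic extension of a perfect field is separable, so $Kv$ perfect gives (TE2) on every finite $E\mid K$. For the forward direction, I will fix $\alpha\in Kv$ with a lift $a\in\VR_K$, pick a root $b\in\overline{K}$ of $X^p-a$, and set $E=K(b)$. Since $\Char Kv=p$, the element $bv\in Ev$ will satisfy $X^p-\alpha=(X-bv)^p$ over $Kv$, so its minimal polynomial over $Kv$ must have the form $(X-bv)^m$ with $1\leq m\leq p$. The coefficient of $X^{m-1}$ is $-m\cdot bv$: if $1\leq m<p$ this lies in $Kv$ and forces $bv\in Kv$ (hence $m=1$ and $\alpha=(bv)^p$), while $m=p$ would make the minimal polynomial $X^p-\alpha$ purely inseparable, contradicting (TE2). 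Either admissible case shows $\alpha$ is a $p$-th power in $Kv$.

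The only mild subtlety I anticipate is confirming $[K(b):K]\in\{1,p\}$ for a root of $X^p-a$, which rests on the classical irreducibility criterion; beyond that, the argument reduces quickly to henselicity, torsion-freeness of ordered value groups, and the purely inseparable nature of the $p$-th power map in residue characteristic $p$.
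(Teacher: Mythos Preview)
Your proof is correct and follows essentially the same approach as the paper: both test (TE1) and (TE2) against the extension $K(b)$ with $b^p=a\in K$, and both handle the converse direction of (1) via the torsion-freeness argument with Cauchy's theorem. The only cosmetic difference is that the paper argues the forward directions by contrapositive (choosing $a$ with $v(a)$ not $p$-divisible, resp.\ $av$ without a $p$-th root, and exhibiting a bad subextension), whereas you argue directly from (TE1)/(TE2) using the irreducibility criterion for $X^p-a$ and an analysis of the minimal polynomial of $bv$; the underlying ideas are identical.
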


Let us present two characterizations of tame fields found in the literature. The first one is from  \cite{kuhlmannTameFields}.

\begin{Prop}\label{propKulhmannAlgMaxvKDivKvPerfect} \cite[Theorem 3.2]{kuhlmannTameFields} A valued field $(K,v)$ is tame if and only if it is algebraically maximal, $vK$ is $p$-divisible and $Kv$ is perfect.
\end{Prop}

In \cite{BoulagouazTeo52}, we find a description of a tame extension  in terms of the associated graded ring ${\rm gr }(K)$ of $v$. Denote by ${\rm Frac}({\rm gr }(K))$ the  field  of fractions of ${\rm gr }(K)$.

\begin{Prop}\label{propBoulagouazTeo5}
	\cite[Theorem 5]{BoulagouazTeo52} Let $(L\mid K, v)$ be a finite extension of degree $n$. Then $(L\mid K, v)$ is a tame extension  if and only if ${\rm Frac}({\rm gr }(L))$ is a separable extension of ${\rm Frac}({\rm gr }(K))$ of degree $n$.
\end{Prop}

\begin{Obs}\label{obsBoulagouazRPerfect}
	In  \cite{GradedFieldsGaloisTheoryBoulagouaz}, it is developed  the theory of algebraic graded field extensions. It is proved that if $R=\bigoplus_{\gamma\in \Gamma}R_\gamma$ is a graded field of characteristic $p>0$ over an abelian torsion-free grading group $\Gamma$, then $R$ is perfect (which  in \cite{GradedFieldsGaloisTheoryBoulagouaz} means that every \textit{graded field extension} of $R$ is separable) if and only if $R_0$ is perfect and $\Gamma$ is $p$-divisible.  
	Hence, by Proposition \ref{propBoulagouazTeo5} above, $(K,v)$ is a tame field if and only if ${\rm gr}(K)$ is perfect (in the terms of \cite{GradedFieldsGaloisTheoryBoulagouaz}) and for every finite extension $(L\mid K,v)$ we have $[{\rm Frac}({\rm gr }(L)):{\rm Frac}({\rm gr }(K))]=[L:K]$.
\end{Obs}

In the next section  we will show a way of characterizing tame fields in terms of complete sequences of key polynomials.

\subsection{Tame fields and finite complete sequences of key polynomials } $\,$

\vspace{0.3cm}

During this section we suppose  $\Char Kv = p>0$. Although the next result can be proved using Proposition \ref{propKulhmannAlgMaxvKDivKvPerfect}, we will first present a proof using  the results around the property (FCS) studied in this paper.

\begin{Prop}\label{teoTameiffGradedRingPerfFiniteSeqKP}
	Let $(K, v)$ be a valued field. Then $(K,v)$ is a tame field if and only if $(K,v)$ satisfies (FCS), $vK$ is $p$-divisible and  $Kv$ is perfect.
	
\end{Prop}

\begin{proof}If $(K,v)$ is tame, then it is henselian and defectless by definition and
	Lemma \ref{lemTE1pDivTE2perfect} says that $vK$ is $p$-divisible and $Kv$ is perfect. In particular, $(K,v)$ is simply defectless. Hence, by Corollary \ref{corFCS1simplydefectless} we have (FCS). 
	\vspace{0.1cm}
	
	For the converse,  $vK$ $p$-divisible, $Kv$ perfect and (FCS) together imply (FCS*) by Corollary \ref{lemaFCS1andFCS2},  which is equivalent to $(K,v)$ being henselian and defectless by  Corollary \ref{teoHensDefectlessiffKjfiniteCompleteseq}. By Lemma \ref{lemTE1pDivTE2perfect}, we have (TE1) and (TE2).

\end{proof}

\begin{Obs}
One can also deduce the converse of Proposition \ref{teoTameiffGradedRingPerfFiniteSeqKP} using algebraically maximal fields, by noticing that (FCS) implies $(K,v)$ algebraically maximal (Proposition \ref{propFCS1impliesAlgMax}). Then $(K,v)$ is a tame field by Proposition \ref{propKulhmannAlgMaxvKDivKvPerfect}.  
\end{Obs}

When $(K,v)$ is henselian, we have the diagram of implications as illustrated in Figure \ref{figImplications} at the Introduction.
We end this section with a compilation of equivalences proved or stated in this paper.

\begin{Cor}
	Suppose $(K,v)$ henselian, $vK$ is $p$-divisible and $Kv$ is perfect. The following  properties are equivalents. 
\begin{enumerate}[label=\rm (\roman*)]
	
	\item $(K,v)$ is  defectless.
	
	\item $(K,v)$ is  simply defectless.
	
	\item $(K,v)$ satisfies (FCS).

	\item $(K,v)$ satisfies (FCS*).
	
	\item $(K,v)$ is tame.

	\item $(K,v)$ is  algebraically maximal.
	
	\item $(K,v)$ is such that $\overline{v}(\eta-K)$ has a maximum for every $\eta\in \overline{K}\setminus K$.
	
	\item $(K,v)$ is such that every finite extension $(L\mid K,v)$ satisfies $[{\rm Frac}({\rm gr }(L)):{\rm Frac}({\rm gr }(K))]=[L:K]$.
\end{enumerate}
\end{Cor}

In the next two last sections, we will study the properties $vK$ $p$-divisible and $Kv$ perfect. First, we will describe these two properties using the Frobenius endomorphism on ${\rm gr }(K)$. The techniques necessary for this description  will be independent of the theory developed in \cite{GradedFieldsGaloisTheoryBoulagouaz}. Then, we will see how $vK$ $p$-divisible or $Kv$ perfect  interact with the associated module of Kähler differentials of particular valued field extensions.

\section{The graded ring ${\rm gr}(\VR_K)$}\label{GradedRings}

\subsection{The graded ring viewed as a semigroup ring} 
Let $(K, v)$ be a valued field and $\VR_K\subseteq K$ the valuation ring associated to $v$. Consider $v K^{\geq 0}$ the semigroup generated by $v(\VR_K\setminus \{0\})$. Take 
	$${\rm gr}(\VR_K):= \bigoplus_{\gamma\in v K^{\geq 0}}\mathcal{P}_\gamma/ \mathcal{P}_{\gamma}^{+}.$$

In \cite{Matheus}, it is proved that 
${\rm gr}(\VR_K)$ is isomorphic to the semigroup ring $Kv [t^{v K^{\geq 0}}]$, that is, the set of finite formal sums
$$\sum_{i=1}^n b_iv t^{\gamma_i}, \text{ with } a_iv \in  Kv \text{ and } \gamma_i\in v K^{\geq 0} \text{ for each } i=1, \ldots, n $$
where
$$\sum_{i=1}^n b_iv \cdot t^{\gamma_i} + \sum_{i=1}^n b^{'}_iv \cdot t^{\gamma_i} = \sum_{i=1}^{n}(b_i+b^{'}_i)v \cdot t^{\gamma_i}$$
and, in general, the product is not given  by the usual multiplication but by the one defined as follows. A \textbf{choice  function} (or \textbf{section}) on $v K^{\geq 0}$ is any map $\epsilon : v K^{\geq 0} \to \VR_K$ such that $v(\epsilon(\gamma))=\gamma$ for every $\gamma\in v K^{\geq 0}$ (i.e. $\epsilon$ is a right inverse of $v$). We only consider choice functions with $\epsilon(0)=1$. For each choice function, we define the \textbf{twisting}
$$\overline{\epsilon}: v K^{\geq 0} \times v K^{\geq 0} \to  Kv  $$
$$ \overline{\epsilon}(\gamma, \gamma'):=\left(\frac{\epsilon(\gamma)\epsilon(\gamma')}{\epsilon(\gamma+\gamma')}\right)v. $$
The map
$$\times_\epsilon : \vspace{0.2cm}  Kv [t^{v K^{\geq 0}}] \times  Kv[t^{v K^{\geq 0}}] \to  Kv [t^{v K^{\geq 0}}] $$
$$ (a,b)\mapsto a\times_\epsilon b$$
where we set $t^\gamma \times_\epsilon t^{\gamma'} := \overline{\epsilon}(\gamma, \gamma')\cdot t^{\gamma+\gamma'}$ and extend it to  $ Kv [t^{v K^{\geq 0}}]$ in the natural way. This map satisfies the multiplication axioms and turns  $ Kv [t^{v K^{\geq 0}}]$ into a commutative ring. This multiplication is called \textbf{twisted multiplication} induced by $\epsilon$ and we denote the ring by $  Kv [t^{v K^{\geq 0}}]_\epsilon$. 

\begin{Obs}\label{ObsTwistedMult}
	For $a,b\in \VR_K$, by the definition of the twisted multiplication we have
	$$t^{v(ab)} = t^{v(a)+v(b)} = \left(\frac{\epsilon(v(a)+v(b))}{\epsilon(v(a))\epsilon(v(b))}\right)v\cdot t^{v(a)}\times_\epsilon t^{v(b)}. $$
\end{Obs}

\begin{Prop}\cite[Theorem 1.1]{Matheus}
	$${\rm gr}(\VR_K) \cong  Kv [t^{v K^{\geq 0}}]_\epsilon$$ 
	through the isomorphism $$\inv_v(a)\overset{\psi}{\longmapsto} \left(\frac{a}{\epsilon(v(a))}\right)v \cdot t^{v(a)}.$$
\end{Prop}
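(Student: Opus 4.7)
The plan is to define $\psi$ on homogeneous components, extend it linearly to all of ${\rm gr}(\VR_K)$, and then verify in turn that it is well-defined, additive, multiplicative (this is where the twisting earns its keep), and bijective. For a nonzero $a\in \VR_K$ with $v(a)=\gamma$, set $\psi(\inv_v(a)):=\left(a/\epsilon(\gamma)\right)v\cdot t^{\gamma}$ and $\psi(0)=0$, then extend additively to sums of homogeneous pieces of distinct degrees.

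Well-definedness on the $\gamma$-component follows from Lemma \ref{lemPropinv}(3): if $\inv_v(a)=\inv_v(a')$ with $v(a)=v(a')=\gamma$, then $v(a-a')>\gamma$, so $v\bigl((a-a')/\epsilon(\gamma)\bigr)>0$, which means $(a/\epsilon(\gamma))v=(a'/\epsilon(\gamma))v$ in $Kv$. Additivity within degree $\gamma$ is automatic: if $v(a+a')=\gamma$ then $\inv_v(a)+\inv_v(a')=\inv_v(a+a')$ and dividing by $\epsilon(\gamma)$ is $K$-linear; if $v(a+a')>\gamma$ then both sides produce $0$ at degree $\gamma$. Between distinct degrees there is nothing to check.

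The multiplicativity step is the calculation for which the twist $\overline{\epsilon}$ is designed. For nonzero $a,b\in \VR_K$ with values $\gamma,\delta$, unwinding the definitions gives
\[
\psi(\inv_v(a))\times_{\epsilon}\psi(\inv_v(b))
=\left(\frac{a}{\epsilon(\gamma)}\right)\!v\cdot\left(\frac{b}{\epsilon(\delta)}\right)\!v\cdot\overline{\epsilon}(\gamma,\delta)\cdot t^{\gamma+\delta}
=\left(\frac{ab}{\epsilon(\gamma+\delta)}\right)\!v\cdot t^{\gamma+\delta},
\]
which equals $\psi(\inv_v(ab))=\psi(\inv_v(a)\cdot\inv_v(b))$ by Lemma \ref{lemPropinv}(2). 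Thus the choice-function factors $\epsilon(\gamma)\epsilon(\delta)/\epsilon(\gamma+\delta)$ appearing in $\overline{\epsilon}$ are precisely what is needed to reconcile the independent representatives chosen in each homogeneous piece.

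For bijectivity, construct an explicit inverse $\psi^{-1}$ sending $cv\cdot t^{\gamma}$ (with $c\in\VR_K$ any lift of a residue class in $Kv$) to $\inv_v(c\,\epsilon(\gamma))$; two lifts $c,c'$ differ by an element of $\MI_K$, so $c\,\epsilon(\gamma)-c'\,\epsilon(\gamma)\in \mathcal P_{\gamma}^{+}$ and $\psi^{-1}$ is well-defined. Injectivity of $\psi$: if $(a/\epsilon(\gamma))v=0$ with $v(a)=\gamma$, then $v(a)>v(\epsilon(\gamma))=\gamma$, contradicting $v(a)=\gamma$; hence only genuine $0$'s in the graded ring map to $0$. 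Composition in either order is the identity on the generating homogeneous pieces by direct inspection, so $\psi$ is a bijection. The main obstacle throughout is purely bookkeeping: making sure the twisting $\overline{\epsilon}$ is used exactly where the representatives $\epsilon(\gamma)$ and $\epsilon(\delta)$ fail to compose into $\epsilon(\gamma+\delta)$, which the multiplicativity calculation above captures in one line.
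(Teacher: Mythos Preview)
The paper does not prove this proposition; it is quoted as \cite[Theorem 1.1]{Matheus} and stated without argument. Your proposal is a correct direct verification: well-definedness via Lemma~\ref{lemPropinv}(3), multiplicativity via the cancellation $\frac{a}{\epsilon(\gamma)}\cdot\frac{b}{\epsilon(\delta)}\cdot\frac{\epsilon(\gamma)\epsilon(\delta)}{\epsilon(\gamma+\delta)}=\frac{ab}{\epsilon(\gamma+\delta)}$, and bijectivity via the explicit inverse $cv\cdot t^{\gamma}\mapsto \inv_v(c\,\epsilon(\gamma))$. There is nothing to compare against in the present paper, and your argument is essentially the standard one (and the one in \cite{Matheus}).
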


In the following subsection we give a direct application of the above isomorphism.

\subsection{Frobenius endomorphism on ${\rm gr}(\VR_K) $ } For a ring $R$ with $\Char R=p>0$, the \textbf{Frobenius endomorphism} on $R$ is the map given by $a\mapsto a^p$.  Suppose $\Char Kv = p>0$ and consider the Frobenius endomorphisms $F$ and $\overline{F}$ on ${\rm gr}(\VR_K) $ and $Kv [t^{v K^{\geq 0}}]_\epsilon$  respectively. 
The lemma below is proved using simple computations based on Remark \ref{ObsTwistedMult}.

\begin{Lema}
	The diagram 
	\begin{equation*}
		\begin{tikzcd}[column sep=1cm, row sep = 1cm]
			{\rm gr}(\VR_K) \arrow{r}{F} \arrow[swap]{d}{\psi} & {\rm gr}(\VR_K) \arrow{d}{\psi}\\
			Kv [t^{v K^{\geq 0}}]_\epsilon \arrow[swap]{r}{\overline{F}} &  Kv [t^{v K^{\geq 0}}]_\epsilon
		\end{tikzcd}
	\end{equation*}
is commutative. Hence, $F$ is surjective if and only if $\overline{F}$ is surjective.
\end{Lema}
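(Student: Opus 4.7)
The proof should be essentially immediate. The plan is to observe that $\psi$ is a ring isomorphism (as stated just before the lemma) and that the Frobenius endomorphism in any ring of characteristic $p$ is by definition the $p$-th power map, which is always preserved by ring homomorphisms. So for any $\alpha \in {\rm gr}(\VR_K)$,
\[
\psi(F(\alpha)) = \psi(\alpha^{p}) = \psi(\alpha)^{p} = \overline{F}(\psi(\alpha)),
\]
where the middle equality uses multiplicativity of $\psi$ (applied $p-1$ times, or once to a $p$-fold product). This proves commutativity of the square.

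If I wanted to make the argument more concrete, I would verify it on a homogeneous generator $\inv_{v}(a)$ with $a\in\VR_{K}$. On the one hand, by Lemma \ref{lemPropinv}(2) we have $F(\inv_{v}(a)) = \inv_{v}(a)^{p} = \inv_{v}(a^{p})$, and hence
\[
\psi(F(\inv_{v}(a))) = \left(\frac{a^{p}}{\epsilon(v(a^{p}))}\right)v \cdot t^{v(a^{p})}.
\]
On the other hand, $\overline{F}(\psi(\inv_{v}(a)))$ is the $p$-th power of $\left(\frac{a}{\epsilon(v(a))}\right)v \cdot t^{v(a)}$ computed using $\times_{\epsilon}$; the telescoping of the twist factors (each of the form $\epsilon(\gamma)\epsilon(\gamma')/\epsilon(\gamma+\gamma')$) collapses to match the formula above. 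But one does not need this direct check: it is automatic from $\psi$ being a ring homomorphism, so the telescoping is really just a restatement of the fact that $\psi$ is multiplicative.

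For the second sentence of the lemma, since $\psi$ is a ring \emph{isomorphism}, it is a bijection. Given any $\beta \in Kv[t^{vK^{\geq 0}}]_{\epsilon}$, write $\beta = \psi(\alpha)$ with $\alpha\in {\rm gr}(\VR_{K})$. If $F$ is surjective, choose $\alpha'$ with $F(\alpha')=\alpha$; then $\overline{F}(\psi(\alpha'))=\psi(F(\alpha'))=\psi(\alpha)=\beta$, so $\overline{F}$ is surjective. The converse direction is analogous, using $\psi^{-1}$. There is no real obstacle here — the lemma is a formal consequence of $\psi$ being an isomorphism of rings of characteristic $p$, and I would expect the author's proof to be only a few lines.
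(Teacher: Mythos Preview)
Your proposal is correct. The paper's proof takes the concrete route you sketch in your second paragraph: it computes $\psi(F(\inv_v(a)))$ and $\overline{F}(\psi(\inv_v(a)))$ explicitly on a homogeneous generator, carrying out the telescoping of the twist factors by hand to see that both expressions equal $\left(\frac{a^{p}}{\epsilon(v(a))^{p}}\right)v \cdot \left(t^{v(a)}\right)^{p}$. Your primary argument---that any ring homomorphism automatically intertwines the $p$-th power map---is the cleaner one and subsumes the explicit check; as you correctly observe, the telescoping identity is nothing more than a restatement of the multiplicativity of $\psi$. The paper's version has the minor advantage of being self-contained (it does not appeal to the already-established fact that $\psi$ is a ring isomorphism), but your abstract argument is both shorter and more transparent about why the result holds.
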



The following result will say that the surjectiveness of the Frobenius endomorphism $F$ on ${\rm gr}(\VR_K) $ can be checked by looking at the valued group and the residue field of $(K,v)$.

\begin{Prop}\label{lemFsurjectivePDivPerf}
Suppose $\Char Kv = p>0$.	Then $F$ is surjective if and only if $v K^{\geq 0}$ is $p$-divisible (hence $v K$ is $p$-divisible) and $Kv$ is perfect. 
\end{Prop}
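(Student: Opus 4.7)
The plan is to transfer everything through the isomorphism $\psi$ of the preceding lemma and work instead with the Frobenius $\overline{F}$ on $Kv[t^{vK^{\geq 0}}]_\epsilon$. My first step is to get an explicit formula for $\overline{F}$ on a monomial. A short induction on $k$, using the definition $t^{k\delta}\times_\epsilon t^\delta = \overline{\epsilon}(k\delta,\delta)\cdot t^{(k+1)\delta}$, yields
\[ (t^\delta)^k \;=\; \left(\frac{\epsilon(\delta)^k}{\epsilon(k\delta)}\right)v\cdot t^{k\delta}. \]
Setting $c_\delta := (\epsilon(\delta)^p/\epsilon(p\delta))v$ gives $\overline{F}(a\,t^\delta) = a^p c_\delta\,t^{p\delta}$. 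Since $v(\epsilon(\delta)^p/\epsilon(p\delta)) = 0$, the element $c_\delta$ lies in $(Kv)^\times$; in particular $c_0 = 1$ because $\epsilon(0)=1$. Because Frobenius is additive in characteristic $p$, this extends linearly to
\[ \overline{F}\Bigl(\sum_i a_i\,t^{\delta_i}\Bigr) \;=\; \sum_i a_i^p c_{\delta_i}\,t^{p\delta_i}. \]

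For the forward direction I would assume $\overline{F}$ is surjective and first fix $\gamma\in vK^{\geq 0}$. Writing $t^\gamma = \overline{F}(\sum_i a_i\,t^{\delta_i})$ and separating by degree in the torsion-free group $vK$, the map $\delta\mapsto p\delta$ is injective, so the grading forces some $\delta\in vK^{\geq 0}$ with $p\delta=\gamma$. This yields $p$-divisibility of $vK^{\geq 0}$; since every $\gamma\in vK$ can be written as a difference of two non-negative elements, $vK$ is $p$-divisible as well. To recover that $Kv$ is perfect I would take $\gamma = 0$: for any $b\in Kv^*$, surjectivity provides $b = \overline{F}(\sum_i a_i\,t^0) = \sum_i a_i^p = (\sum_i a_i)^p$ (using $c_0=1$ and additivity of Frobenius), so $b$ is a $p$-th power in $Kv$.

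For the converse, given an arbitrary element $\sum_j b_j\,t^{\gamma_j}$ of $Kv[t^{vK^{\geq 0}}]_\epsilon$, I would use $p$-divisibility of $vK^{\geq 0}$ to pick $\delta_j\in vK^{\geq 0}$ with $p\delta_j=\gamma_j$ and perfectness of $Kv$ to write $b_j/c_{\delta_j} = a_j^p$ for some $a_j\in Kv$; the explicit formula above then gives $\overline{F}(\sum_j a_j\,t^{\delta_j}) = \sum_j b_j\,t^{\gamma_j}$, showing surjectivity. By the preceding lemma, surjectivity of $\overline{F}$ is equivalent to surjectivity of $F$, which settles both directions.

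The only technically delicate point is the inductive computation of $(t^\delta)^p$ and the identification of the unit twist $c_\delta\in (Kv)^\times$; once that formula is in place, both implications reduce to routine matching of grades (for the group-theoretic part) and coefficients (for the residue field part).
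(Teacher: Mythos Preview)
Your argument is correct and follows essentially the same route as the paper: both transfer to $\overline{F}$ via the preceding lemma, compute the $p$-th power of a monomial (you package the twist as $c_\delta=(\epsilon(\delta)^p/\epsilon(p\delta))v$, the paper writes it out each time), and then match grades and coefficients in each direction. One small point of presentation: when you write ``surjectivity provides $b=\overline{F}(\sum_i a_i t^0)$'', surjectivity only hands you a general preimage $\sum_i a_i t^{\delta_i}$, and you need the same grading argument you used for $p$-divisibility (injectivity of $\delta\mapsto p\delta$) to see that only the $\delta_i=0$ term contributes to degree~$0$; the paper spells this step out explicitly.
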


\begin{proof}  Suppose $F$ is surjective, that is, $\overline{F}$ is surjective. We first prove that  $v K^{\geq 0}$ is $p$-divisible. For $v(a)\in v K^{\geq 0}$, since $\overline{F}$ is surjective we have
	$$t^{v(a)} = \left(  \sum_{i=1}^n b_iv\cdot t^{v(a_i)}  \right)^p  = \sum_{i=1}^n b_i^pv \frac{\epsilon(v(a_i^p)}{\epsilon(v(a_i))^p}\cdot t^{pv(a_i)}  $$ 
	for some $b_1v, \ldots, b_nv \in Kv$ and $a_1, \ldots, a_n\in \VR_K$. Since the elements $t^\gamma$, with $\gamma\in v K^{\geq 0}$, are linearly independent, we must have $t^{v(a)}=t^{pv(a_j)}$ for one and only one index $j$, $1\leq j \leq n$. Hence, $v(a) = pv(a_j)$ and therefore $v K^{\geq 0}$ is $p$-divisible.
	
	\vspace{0.3cm} 
	
	 Now we see that $v K$ is $p$-divisible. If $v(a)<0$, then $-v(a)>0$. Since $v K^{\geq 0}$ is $p$-divisible, there exists $v(a')\geq 0$ such that $-v(a)= pv(a')$, that is, $v(a) = p(-v(a'))$. Thus, $v K$ is also $p$-divisible. 
	 
	 \vspace{0.3cm} 
	 
	 Let us prove that $Kv$ is perfect. The restriction $\overline{F}|_{Kv}$ is an endomorphism on $Kv$ since $(av)^p\in Kv$ for every $av \in Kv$.  We know that 
	 $$av =  \left(  \sum_{i=1}^n b_iv\cdot t^{v(a_i)}  \right)^p  = \sum_{i=1}^n b_i^pv \frac{\epsilon(v(a_i^p))}{\epsilon(v(a_i))^p}v\cdot t^{pv(a_i)}\in Kv  $$ 
	 for some $b_1v, \ldots, b_nv \in Kv$ and $a_1, \ldots, a_n\in \VR_K$, because $\overline{F}$ is surjective. Hence, for one and only one index $j$ we have 
	 $$av = av \cdot t^0= b_j^pv \frac{\epsilon(v(a_j^p))}{\epsilon(v(a_j))^p}v\cdot t^{pv(a_j)}. $$
	 Thus, $pv(a_j)=0$ (which implies $v(a_j)=0$) and $av = b_j^pv \frac{\epsilon(v(a_j^p))}{\epsilon(v(a_j))^p}v$. Since $\epsilon(v(a_j)) = \epsilon(0)=1$, we conclude that $\frac{\epsilon(v(a_j^p))}{\epsilon(v(a_j))^p}v = 1v$ and then
	 $$ av = b_j^pv = (b_jv)^p.$$
	 
	 \vspace{0.3cm} 
	 
	  For the converse, suppose $v K^{\geq 0} $ $p$-divisible  and $Kv$  perfect. It is enough to prove that an element of the form  $bv\cdot t^{v(a)}$ is a $p$-power. Since $v K^{\geq 0}$ is $p$-divisible, there exists $a'\in \VR_K$ such that $v(a)=pv(a')$. Since $Kv$ is   perfect, there exists $b'v \in Kv$ such that
	 $$\frac{\epsilon(v(a'^p))}{\epsilon(v(a'))^p}v bv = (b'v)^p. $$
	Hence,
	\begin{align*}
		(b'v\cdot t^{v(a')})^p & = (b'v)^p (t^{v(a')})^p \\
		& = \frac{\epsilon(v(a'^p))}{\epsilon(v(a'))^p}v \, bv \, \frac{\epsilon(v(a'))^p}{\epsilon(v(a'^p))}v \cdot t^{pv(a')}\\
		& = bv\cdot t^{v(a)}.
	\end{align*} 
Therefore, $\overline{F}$ is surjective, then  $F$ is also surjective.
	\end{proof}

We  will say that a ring $R$ with positive characteristic is \textbf{perfect} if the Frobenius endomorphism on $R$ is surjective. 
We note that  ${\rm gr}(\mathcal{O}_K)$ is perfect if and only if  ${\rm gr}(K)$ is perfect. Also, if $K$ is perfect, then ${\rm gr}(K)$ is perfect.

\begin{Obs}\label{obsGradedFields}
Proposition \ref{lemFsurjectivePDivPerf} provides a way of seeing that the surjectiveness of the Frobenius endomorphism on ${\rm gr}(K)$ is equivalent to the definition of perfect graded field given in \cite{GradedFieldsGaloisTheoryBoulagouaz}. 
\end{Obs}

	We can then rephrase Proposition \ref{teoTameiffGradedRingPerfFiniteSeqKP}: $(K,v)$ is a tame field if and only if $(K,v)$ satisfies (FCS) and 	${\rm gr}(K)$ is perfect. This way of seeing the equivalence points out (as in Remark \ref{obsBoulagouazRPerfect}) to the fact that tame extensions can be studied through properties of ${\rm gr}(K)$ and conversely.

\section{Purely inertial and purely ramified extensions}\label{Purely}

In this last section, we give increments for some results of \cite{josneiSpivaKahlerDif}.

\begin{Def}
	Let $(L\mid K, v)$ be a simple algebraic extension of valued fields. We say that:
	\begin{description}
		\item[(i)] $(L\mid K, v)$ is \textbf{purely ramified}  if $vL/vK$ is cyclic and $[L:K]=(vL:vK)$.
		
		\item[(ii)] $(L\mid K, v)$ is \textbf{purely inertial}  if $Lv/Kv$ is simple and $[L:K]=[Lv:Kv]$.
	\end{description}
\end{Def}

If $(L\mid K, v)$ is purely inertial,  take $\eta\in \VR_L$ such that $Lv = Kv(\eta v)$. If $(L\mid K, v)$ is purely ramified, take $\eta\in L$ such that $vL = vK[v(\eta)]$. In both cases, one can show that $L = K(\eta)$. Moreover, the set $\boldsymbol{Q}=\{x\}$ is a complete set for the induced valuation $\nu$ on $K[x]$ \cite[Proposition 5.8]{josneiSpivaKahlerDif}.

\vspace{0.3cm}

Let $\Omega=\Omega_{\VR_L/\VR_K}$ be the \textbf{module of Kähler differentials} for the extension $\VR_L/\VR_K$ (see \cite{kahlerDiffDefinition} for a construction of this module). The study of this module has shown to be crucial to the understanding of deeply ramified fields
 and their relations to the defect (see \cite{cutkoskykuhlmannKahlerDeeplyRamified} and \cite{cutkoskykuhlmannrzepkaIndependetDefect}).

\vspace{0.3cm}

 Suppose $(L\mid K, v)$ purely inertial. We see in \cite[Lemma 6.3 and Proposition 6.4]{josneiSpivaKahlerDif} that $\VR_L = \VR_K[\eta]$ and
$$\Omega \cong \frac{\VR_{ L}}{(g'(\eta))}, $$
where $L=K(\eta)$ and $g$ is the minimal polynomial of $\eta$ over $K$.

\begin{Lema}\label{lemPurelyInertialOmega0Seperable} \cite[Corollary 6.5]{josneiSpivaKahlerDif}
	If $(L\mid K, v)$ is purely inertial, then $\Omega  = (0)$ if and only if $Lv\mid Kv$ is separable.
\end{Lema}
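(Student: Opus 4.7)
The plan is to exploit the explicit isomorphism $\Omega \cong \VR_L/(g'(\eta))$ that has been recorded just above the statement, and then reduce everything to a computation in the residue field.

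First I would observe that $\Omega = (0)$ is equivalent to $(g'(\eta)) = \VR_L$, i.e.\ to $g'(\eta)$ being a unit of $\VR_L$, which in valuation-theoretic terms is just $v(g'(\eta)) = 0$. So the whole statement becomes: $v(g'(\eta)) = 0 \Longleftrightarrow Lv\mid Kv$ is separable.

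Next, since $\eta \in \VR_L$ and $\VR_L = \VR_K[\eta]$, the minimal polynomial $g \in K[x]$ of $\eta$ over $K$ in fact lies in $\VR_K[x]$ (it is monic and $\eta$ is integral over $\VR_K$). Let $\overline{g} \in Kv[x]$ be its reduction modulo $\MI_K$. The key observation is that $\overline{g}$ is the minimal polynomial of $\eta v$ over $Kv$: indeed $\overline{g}(\eta v) = g(\eta)v = 0$, $\overline{g}$ is monic, and
\[
\deg(\overline{g}) = \deg(g) = [L:K] = [Lv:Kv],
\]
where the last equality is the defining property of a purely inertial extension. Since $Lv = Kv(\eta v)$, the degree of $\eta v$ over $Kv$ equals $[Lv:Kv] = \deg(\overline{g})$, so $\overline{g}$ is irreducible and hence minimal for $\eta v$.

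Now reduction and formal derivative commute on $\VR_K[x]$, so $\overline{g'}(\eta v) = g'(\eta)v$. Therefore $v(g'(\eta)) = 0$ if and only if $\overline{g}'(\eta v) \neq 0$, which by the standard separability criterion for an algebraic element (applied to the minimal polynomial $\overline{g}$) is equivalent to $\eta v$ being separable over $Kv$. Since $Lv = Kv(\eta v)$ is generated over $Kv$ by $\eta v$, this is precisely the condition that $Lv\mid Kv$ is separable, completing the chain of equivalences.

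The main obstacle is the middle step: identifying $\overline{g}$ as the minimal polynomial of $\eta v$. Everything else is bookkeeping, but this step uses crucially the purely inertial hypothesis $[L:K]=[Lv:Kv]$; without it $\overline{g}$ could well factor in $Kv[x]$ and the derivative argument would fail.
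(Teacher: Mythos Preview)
Your proof is correct and follows essentially the same route as the paper: reduce $\Omega=(0)$ to $v(g'(\eta))=0$ via the isomorphism $\Omega\cong\VR_L/(g'(\eta))$, then pass to the residue field and invoke the separability criterion for the minimal polynomial of $\eta v$. The paper's version is terser and leaves implicit the identification of $\overline{g}$ with the minimal polynomial of $\eta v$ over $Kv$; your explicit use of the purely inertial hypothesis $[L:K]=[Lv:Kv]$ to justify this step is exactly the content the paper is suppressing.
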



The following proposition already  appears in \cite[Theorem 4.3]{cutkoskykuhlmannKahlerDeeplyRamified} and follows directly from Lemma \ref{lemPurelyInertialOmega0Seperable} above.

\begin{Prop}\label{propPurelyInertialOmega0}
	If $Kv$
	is perfect, then every purely inertial extension of $K$ is such that $\Omega=(0)$. 
\end{Prop}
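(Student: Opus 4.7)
The plan is to reduce the claim directly to Lemma \ref{lemPurelyInertialOmega0Seperable}, which says that for a purely inertial extension $(L\mid K,v)$ one has $\Omega=(0)$ if and only if $Lv\mid Kv$ is separable. So the whole task becomes: show that the hypothesis ``$Kv$ is perfect'' forces the residue extension $Lv\mid Kv$ to be separable.

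First, I would unpack the definition of purely inertial extension. By definition $(L\mid K,v)$ is a simple algebraic extension satisfying $[L:K]=[Lv:Kv]$ with $Lv/Kv$ simple. In particular, since $L\mid K$ is simple algebraic, it is finite, and therefore $Lv\mid Kv$ is a finite (simple) field extension.

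Next, I would invoke the standard characterization of perfect fields: a field $k$ of positive characteristic (and trivially also in characteristic zero) is perfect if and only if every algebraic extension of $k$ is separable. Since $Kv$ is perfect by hypothesis and $Lv\mid Kv$ is finite (hence algebraic), it follows that $Lv\mid Kv$ is separable.

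Finally, applying Lemma \ref{lemPurelyInertialOmega0Seperable} to our purely inertial extension yields $\Omega = (0)$, as claimed. There is really no obstacle here: the content of the proposition is a direct concatenation of the definition of ``perfect'' with the residue-level criterion already established in Lemma \ref{lemPurelyInertialOmega0Seperable}. The only mild subtlety worth pointing out explicitly in the writeup is that we use that $Lv\mid Kv$ is finite, which comes from the defining equality $[L:K]=[Lv:Kv]$ together with the fact that simple algebraic extensions are finite; no additional finiteness hypothesis on $L\mid K$ is needed beyond what is already built into the definition of ``purely inertial.''
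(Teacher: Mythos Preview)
Your proposal is correct and follows essentially the same argument as the paper: both reduce immediately to Lemma~\ref{lemPurelyInertialOmega0Seperable} by observing that $Kv$ perfect implies every algebraic extension of $Kv$ is separable, so in particular $Lv\mid Kv$ is separable. Your version is slightly more detailed in justifying why $Lv\mid Kv$ is finite, but the content is identical.
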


\begin{proof} If
	$Kv$ is perfect,
	then all algebraic extensions of $Kv$ are separable. Therefore, if $(L\mid K, v)$ is purely inertial, then $Lv\mid Kv$ is a simple algebraic extension, thus separable. By the Lemma \ref{lemPurelyInertialOmega0Seperable}, we have $\Omega  = (0)$. 	
\end{proof}

Suppose now $(L\mid K, v)$ purely ramified. Take $\eta$ such that $v(\eta)$ generates $vL$ over $vK$. Then $L=K(\eta)$. Consider the set $v(\eta- K)$. 
 Hence $\gamma=v(\eta)$ is the maximum of $v(\eta - K)$ (since $v(\eta-a)=\nu(x-a)=\nu_x(x-a)\leq \nu(x)=v(\eta)$).
 
 \vspace{0.3cm}
 
  Assume $\gamma>0$. Let $\Delta$ denote the greatest convex 
  subgroup of $vL$ such that $\Delta<\gamma$. Let $g = a_0+a_1x+\ldots +x^n$ be the minimal polynomial of $\eta$ over $K$.

\begin{Lema}\cite[Proposition 6.13]{josneiSpivaKahlerDif} Suppose $(L\mid K, v)$ purely ramified.
	\begin{description}
		\item[(i)] If $\left(\frac{vL}{\Delta}\right)_{>0}$ contains a minimal element, then $\Omega\neq (0)$.
		
		\item[(ii)] If $\left(\frac{vL}{\Delta}\right)_{>0}$ does not contain a minimal element, then $\Omega = (0)$ if and only if there is $l$, $1\leq l \leq n$, such that 
		$$v(l)+v(a_l)-(n-l)\gamma\in \Delta. $$
	\end{description}
\end{Lema}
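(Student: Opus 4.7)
The plan is to realize $\Omega$ explicitly as an $\VR_L$-module generated by $d\eta$ whose ideal of relations is governed by $g'(\eta)$ together with the differentials $d(c\eta^l)=cl\eta^{l-1}d\eta$ (derived via the Leibniz rule and $dc'=0$ for $c'\in\VR_K$) of elements $c\eta^l\in\VR_L$ lying outside $\VR_K[\eta]$; then $\Omega=(0)$ iff this ideal contains a unit of $\VR_L$. I would first carry out the valuation computation for $g'(\eta)=\sum_{l=1}^{n}l a_l\eta^{l-1}$: each summand has valuation $v(l)+v(a_l)+(l-1)\gamma=\alpha_l+(n-1)\gamma$, where $\alpha_l:=v(l)+v(a_l)-(n-l)\gamma$. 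Since $g$ is the minimal polynomial of $\eta$ with $v(\eta)=\gamma$, the Newton polygon of $g$ has a single slope $-\gamma$, so $v(a_l)\geq (n-l)\gamma$, and hence $\alpha_l\geq v(l)\geq 0$. In particular $v(g'(\eta))\geq (n-1)\gamma>0$, so $g'(\eta)$ by itself is never a unit.

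For case~(i), I would argue that the existence of a minimum in $(vL/\Delta)_{>0}$ forces $\VR_L=\VR_K[\eta]$: the gap condition translates to $vK\cap(0,\gamma)=\emptyset$, so for any $b=\sum_{l=0}^{n-1}c_l\eta^l\in\VR_L$, the inequality $v(c_l)\geq -l\gamma$ combined with the absence of $vK$-values in $(-l\gamma,0)$ forces $v(c_l)\geq 0$. Consequently the ideal of relations is generated by $g'(\eta)$ alone, so $\Omega\cong\VR_L/(g'(\eta))$, and $v(g'(\eta))\geq (n-1)\gamma>0$ then yields $\Omega\neq(0)$.

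For case~(ii), density of $vL/\Delta$ makes $vK$ meet $(0,\gamma)$ densely, so $\VR_L\supsetneq \VR_K[\eta]$ and the extra relations $cl\eta^{l-1}d\eta=0$ become available from elements $c\eta^l\in\VR_L$ with $c\in K\setminus\VR_K$. For the implication $(\Longleftarrow)$, given $\alpha_{l_0}\in\Delta$, I would use density to choose $c\in K$ with $v(c)$ approximating $-v(l_0)-(l_0-1)\gamma$ closely enough that $cl_0\eta^{l_0-1}$ has valuation lying in $\Delta$, and then combine with an appropriate multiple of $g'(\eta)$ to manufacture a relation whose coefficient of $d\eta$ is a unit, so that $\Omega=(0)$. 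Conversely, if every $\alpha_l\notin\Delta$, every admissible relation obtainable from $g'(\eta)$ and the $d(c\eta^l)$'s has valuation strictly above $\Delta$, preventing any unit from appearing in the relation ideal and forcing $\Omega\neq(0)$.

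The main obstacle will be the density construction in case~(ii): precisely choosing the approximations $c\in K$ and assembling the resulting differentials together with multiples of $g'(\eta)d\eta=0$ to obtain a genuine relation with zero valuation. This requires careful bookkeeping of the approximation error modulo $\Delta$ and verifying that the final cancellation genuinely yields a unit annihilator of $d\eta$ rather than merely an infinitesimal one; the hypothesis $\alpha_{l_0}\in\Delta$ is precisely what makes this bookkeeping succeed.
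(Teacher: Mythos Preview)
The paper does not give its own proof of this lemma: it is quoted from \cite[Proposition~6.13]{josneiSpivaKahlerDif} and immediately applied as a black box. So there is nothing in the paper to compare your argument against.

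On its own merits, your sketch for case~(i) contains a real gap. From the existence of a minimum in $(vL/\Delta)_{>0}$ you claim $vK\cap(0,\gamma)=\emptyset$ and then jump to ``absence of $vK$-values in $(-l\gamma,0)$'' for every $l\le n-1$, concluding $\VR_L=\VR_K[\eta]$. The second step does not follow from the first. Concretely, take $vL=\Z$, $vK=3\Z$, $n=3$, and $\gamma=v(\eta)=2$; then $\Delta=0$ and $(vL)_{>0}$ has minimum $1$, so we are in case~(i). Here $vK\cap(0,\gamma)=\emptyset$, yet $-3\in vK\cap(-2\gamma,0)$, so choosing $c_2\in K$ with $v(c_2)=-3$ gives $c_2\eta^2\in\VR_L\setminus\VR_K[\eta]$ (its value is $1$, while every element of $\VR_K[\eta]$ has value in $\{0,2,3,4,\ldots\}$). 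Hence $\VR_L\neq\VR_K[\eta]$ and your reduction to $\Omega\cong\VR_L/(g'(\eta))$ breaks down. A correct treatment of~(i) has to work with an element whose value actually realizes the minimum of $(vL/\Delta)_{>0}$ (a uniformizer for the rank-one quotient), not with the given $\eta$.

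There is a second subtlety in your presentation of $\Omega$. For $c\in K\setminus\VR_K$ with $c\eta^l\in\VR_L$, the identity $d(c\eta^l)=cl\eta^{l-1}\,d\eta$ cannot be read off from the Leibniz rule directly, since $c\notin\VR_L$ and $dc$ is undefined in $\Omega_{\VR_L/\VR_K}$; worse, $cl\eta^{l-1}$ may itself fail to lie in $\VR_L$, so the right-hand side need not even be an element of $\Omega$. Your case~(ii) programme rests on exactly this description of the relation ideal, so before the density bookkeeping can begin you need a genuine presentation of $\VR_L$ as an $\VR_K$-algebra and of $\Omega$ in terms of it.
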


\vspace{0.2cm}

Consider the following condition.

\vspace{0.2cm}

\begin{description}
	\item[(DRvg)] whenever $\Gamma_1\subsetneq \Gamma_2$ are convex subgroups of $vK$, then $\Gamma_2/\Gamma_1$ is not isomorphic to $\Z$.
\end{description}

\begin{Lema}\label{lemVKpDivDRvg}
	If $vK$ is $p$-divisible, then (DRvg) is satisfied.
\end{Lema}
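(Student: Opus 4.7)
The plan is to argue by contradiction, exploiting the fact that $p$-divisibility is inherited by convex subgroups (because ordered abelian groups are torsion-free) and also by quotients, while $\mathbb{Z}$ fails to be $p$-divisible.

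First I would suppose, for contradiction, that there exist convex subgroups $\Gamma_1 \subsetneq \Gamma_2$ of $vK$ with $\Gamma_2/\Gamma_1 \cong \mathbb{Z}$. The key preliminary step is to verify that $\Gamma_2$ itself is $p$-divisible. Given $\gamma \in \Gamma_2$, by the $p$-divisibility of $vK$ there is some $\delta \in vK$ with $p\delta = \gamma$. I would then show $\delta \in \Gamma_2$: reducing to the case $\gamma > 0$ (the case $\gamma < 0$ follows by passing to $-\gamma$, and $\gamma = 0$ is trivial), since $vK$ is torsion-free, $\delta > 0$, and from $p\delta = \gamma$ together with $p \geq 2$ we get $0 < \delta \leq \gamma$. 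Convexity of $\Gamma_2$ then forces $\delta \in \Gamma_2$.

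Next I would observe that the quotient $\Gamma_2/\Gamma_1$ of a $p$-divisible group is automatically $p$-divisible. Combining this with the hypothesis $\Gamma_2/\Gamma_1 \cong \mathbb{Z}$ yields a contradiction, since $\mathbb{Z}$ is manifestly not $p$-divisible (the element $1$ has no $p$-th root in $\mathbb{Z}$). This contradiction establishes (DRvg).

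I do not expect any serious obstacle here: the whole argument rests on the standard fact that convex subgroups of a torsion-free ordered abelian group are pure (in particular, divisibility by any integer descends), and on the trivial observation that $\mathbb{Z}$ is not $p$-divisible. The only mildly delicate point is justifying $\delta \in \Gamma_2$ via convexity, but the chain of inequalities $0 < \delta < \gamma$ handles this immediately.
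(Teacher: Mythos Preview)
Your proof is correct and uses essentially the same mechanism as the paper's: both rely on convexity of $\Gamma_2$ to show that the $p$-divisor $\delta$ of $\gamma\in\Gamma_2$ stays in $\Gamma_2$. The only cosmetic difference is the endgame: you conclude by noting that $\Gamma_2/\Gamma_1$ inherits $p$-divisibility (which $\mathbb{Z}$ lacks), whereas the paper iterates the same step to produce an infinite strictly decreasing sequence of positive elements in $\Gamma_2/\Gamma_1$; these are two phrasings of the same observation.
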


\begin{proof}Take $\gamma\in \Gamma_2\setminus \Gamma_1$, without lost of generality  $\gamma\geq 0$. Then $\gamma=p\delta$  for some $\delta\in vK^{\geq 0}$. We must have $0\leq \delta < \gamma$. Hence, by the convexity $\delta\in \Gamma_2$. Also, $\delta\not \in \Gamma_1$ since in this case $\gamma+\Gamma_1=p\delta+\Gamma_1=\Gamma_1$ and then we would have $\gamma\in \Gamma_1$, which is not the case. 
	
	By the induced order, $\Gamma_1<\delta+\Gamma_1<\gamma+\Gamma_1$. By repeating this process we construct a strictly decreasing sequence of positive elements in $\Gamma_2/\Gamma_1$. Therefore, $\Gamma_2/\Gamma_1\not\cong \Z$. 
\end{proof}

Using the above lemmas, we can deduce a result similar to Proposition \ref{propPurelyInertialOmega0} for purely ramified extensions. 

\begin{Prop}\label{propPurelyRamifiedOmega0}
	If $vK$ is $p$-divisible (or more generally if (DRvg) is satisfied),
	 then every purely ramified extension of $K$ is such that $\Omega=(0)$. 
\end{Prop}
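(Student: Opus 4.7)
The plan is to reduce the proof to verifying the two conditions of the preceding lemma: namely (i) that $(vL/\Delta)_{>0}$ contains no minimal element, and (ii) that there exists $l$ with $1\leq l \leq n$ such that $v(l)+v(a_l)-(n-l)\gamma \in \Delta$, where $g = a_0 + a_1 x + \cdots + x^n$ is the minimal polynomial of $\eta$ over $K$.

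For (i), I would argue by contradiction under (DRvg). Suppose $\varepsilon + \Delta$ is the minimal positive element of $vL/\Delta$, so in particular $\varepsilon > \Delta$. Let $\Gamma'$ be the smallest convex subgroup of $vL$ containing $\varepsilon$. Since every positive $\delta \in \Delta$ satisfies $\delta < \varepsilon$, we have $\Delta \subseteq \Gamma'$. I would then show that any convex subgroup $\Delta \subsetneq \Delta' \subsetneq \Gamma'$ would produce a positive element of $vL/\Delta$ strictly below $\varepsilon + \Delta$, contradicting minimality; hence $\Delta$ is maximal convex proper in $\Gamma'$, making $\Gamma'/\Delta$ archimedean. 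Combined with the existence of the minimal positive element $\varepsilon + \Delta$, this gives $\Gamma'/\Delta \cong \mathbb{Z}$. Now set $\Gamma'_K := \Gamma'\cap vK$ and $\Delta_K := \Delta\cap vK$, which are convex in $vK$, and observe that the induced map $\Gamma'_K/\Delta_K \hookrightarrow \Gamma'/\Delta$ is injective. Since $(vL:vK)=n$ forces $n\varepsilon \in vK$, and torsion-freeness of $\Gamma'/\Delta \cong \mathbb{Z}$ forces $n\varepsilon \notin \Delta$, the image of $\Gamma'_K/\Delta_K$ is a nontrivial subgroup of $\mathbb{Z}$ and therefore $\Gamma'_K/\Delta_K \cong \mathbb{Z}$, contradicting (DRvg).

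For (ii), I would exploit the structure of the minimal polynomial. Because $g(\eta)=0$ and the values $v(a_l\eta^l) = v(a_l) + l\gamma$ (for nonzero $a_l$) lie in distinct cosets of $vK$ for $l \in \{0,\ldots,n-1\}$, while $v(\eta^n)=n\gamma \in vK$ shares the coset of $l=0$, the only way for two terms to attain the minimum valuation (which must happen, since $v(0)=\infty$) is $l=0$ and $l=n$. This forces $v(a_0)=n\gamma$ and $v(a_l) > (n-l)\gamma$ for $1 \leq l \leq n-1$. Taking $l=n$ in (ii) reduces the requirement to $v(n) \in \Delta$. Under $p$-divisibility of $vK$, I would show $p\nmid n$: if $p \mid n$, say $n=pm$, then $m\gamma \notin vK$ while $p \cdot m\gamma = n\gamma \in vK$, and by $p$-divisibility we could write $n\gamma = p\beta$ for some $\beta \in vK$, making $m\gamma - \beta$ a nonzero $p$-torsion element of $vL$ — impossible. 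Hence $v(n)=0 \in \Delta$, verifying (ii).

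The main obstacle will be the generalization from $p$-divisibility to (DRvg) alone. Condition (i) carries over to (DRvg) cleanly, as shown above, but (ii) is more delicate: the clean fact $p \nmid n$ relies genuinely on $p$-divisibility and can fail for value groups like $\mathbb{Z}_{(p)}$ that satisfy (DRvg) but are not $p$-divisible. Establishing $v(n) \in \Delta$ (or locating a different suitable $l$) in the (DRvg)-only case therefore requires a more refined analysis of how the convex subgroup lattice of $vK$ interacts with the chosen generator $\gamma$, most likely by showing that if $v(n) \notin \Delta$ then $v(n)$ and $\gamma$ lie in the same archimedean class of $vL$, and pulling this information back to a $\mathbb{Z}$-quotient inside $vK$ via the index-$n$ inclusion $vK \hookrightarrow vL$, in parallel to the argument for (i).
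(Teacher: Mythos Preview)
Your argument follows the same two-step structure as the paper's proof: use (DRvg) to rule out a minimal positive element of $vL/\Delta$, then use $p$-divisibility to show $p\nmid n$ and take $l=n$ so that $v(n)+v(a_n)-(n-n)\gamma=0\in\Delta$. Your treatment of step (i) is in fact more careful than the paper's: the paper simply asserts that a minimal positive element forces $vL/\Delta\cong\Z$ and says this ``contradicts (DRvg)'', without making explicit the passage from convex subgroups of $vL$ back to convex subgroups of $vK$ (and without restricting to the archimedean component containing the minimal element); your intersection $\Gamma'_K/\Delta_K\hookrightarrow\Gamma'/\Delta\cong\Z$ fills exactly that gap.

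Your concern about the (DRvg)-only case in step (ii) is well placed, and you should not regard it as a defect of your proposal relative to the paper: the paper's own proof of (ii) also invokes $p$-divisibility (writing $n\sigma=p\xi'$ with $\xi'\in vK$) and never establishes $v(n)\in\Delta$ under (DRvg) alone. So the parenthetical ``or more generally if (DRvg) is satisfied'' in the statement is not supported by the paper's argument either; both proofs deliver the $p$-divisible case in full and only part (i) under the weaker hypothesis.
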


\begin{proof} By Lemma \ref{lemVKpDivDRvg}, since $vK$ is $p$-divisible, then (DRvg) is satisfied.   We first see that if we have (DRvg) then $\left(\frac{vL}{\Delta}\right)_{>0}$ does not contain a minimal element. Indeed, suppose $\gamma_m+\Delta>0$ minimal element of  $\left(\frac{vL}{\Delta}\right)_{>0}$. Since $\gamma_m\not\in \Delta$, by convexity it follows that $k\gamma_m\not\in \Delta$ for every $k\in \Z^\ast$. Suppose $\sigma\in vL$ such that $k\gamma_m+\Delta<\sigma+\Delta<(k+1)\gamma_m+\Delta$ for some $k$. Hence, $0+\Delta<\sigma-k\gamma_m+\Delta<\gamma_m+\Delta$, contradicting $\gamma_m$ being the minimal element of $\left(\frac{vL}{\Delta}\right)_{>0}$. This shows us that $\frac{vL}{\Delta}$ is equal to the subgroup generated by $\gamma_m+\Delta$. Then, we define the isomorphism $\phi: \left(\frac{vL}{\Delta}\right)\to \Z$ by $\phi(k\gamma_m+\Delta)=k$, contradicting (DRvg).

	\vspace{0.3cm}
	
	Hence, to see that $\Omega=(0)$ it is sufficient to prove that $v(l)+v(a_l)-(n-l)\gamma\in \Delta $ for some $l$, $1\leq l\leq n$ (by the Lemma above).  
	Suppose, aiming for a contradiction, that $p\mid n=[L:K]=(vL:vK)$. Since $vL/vK$ is cyclic, take $\sigma \in vL$ such that $\sigma+vK$ is a generator of the group. We have $n\sigma = \xi\in vK$. Because $vK$ is $p$-divisible, we have $\xi=p\xi'$ for some $\xi'\in vK$. Since $n=pn'$, we have $pn'\sigma=p\xi'$, that is, $ p(n'\sigma-\xi')=0$. This implies $n'\sigma=\xi'\in vK$,  contradicting the order of $vL/vK$ being $n>n'$. Then, $p\nmid n$. Therefore, if we take $l=n$ we conclude that $v(n)+v(a_n)-(n-n)\gamma=0+v(1)+0\gamma=0\in \Delta$. Hence, $\Omega=(0)$.
\end{proof}

Therefore, if $vK$ is $p$-divisible and $Kv$ is perfect (for instance when $(K,v)$ is a tame field), then  every purely ramified extension and every purely inertial extension of $K$ is such that $\Omega=(0)$.

\vspace{0.1cm}

\noindent{\footnotesize CAIO HENRIQUE SILVA DE SOUZA\\
Departamento de Matem\'atica--UFSCar\\
Rodovia Washington Lu\'is, 235\\
13565-905, S\~ao Carlos - SP, Brasil.\\
Email: {\tt caiohss@estudante.ufscar.br}
%
%

%


\end{document}